\numberwithin{equation}{section}
\newtheorem{theorem}{Theorem}
\newtheorem*{theorem*}{Theorem}
\newtheorem{lemma}{Lemma}[section]
\newcommand{\F}{\mathcal{F}}
\newcommand{\E}{\mathbb{E}}
\newcommand{\PP}{\mathbb{P}}
\newcommand{\eps}{\varepsilon}
\newcommand{\pphi}{\varphi}
\newcommand{\G}{\mathcal{G}}
\newcommand{\HH}{\mathcal{H}}
\newcommand{\BMO}{\mathrm{BMO}}
\begin{document}
	\title{On multipliers into martingale $SL^\infty$ spaces for arbitrary filtrations}
	\author{Anton Tselishchev}
	\address{Institute of Analysis, Johannes Kepler University Linz, Alternberger Strasse 69, A-4040 Linz, Austria}
	\email{celis-anton@yandex.ru}
	\thanks{This research was supported by the National Science Centre, Poland, and Austrian Science Foundation FWF joint CEUS programme. National Science Centre project no. 2020/02/Y/ST1/00072 and FWF project no. I5231 and no. P344114}
	\keywords{Martingale square functions, correction theorems, conditional square function}
	\subjclass[2020]{60G42, 60G46, 46B10}
	
	\begin{abstract}
		In this paper we study the following problem: for a given bounded positive function $f$ on a filtered probability space can we find another function (a multiplier) $m$, $0\le m\le 1$, such that the function $mf$ is not ``too small'' but its square function is bounded? We explicitly show how to construct such multipliers for the usual martingale square function and for so-called conditional square function. Besides that, we show that for the usual square function more general statement can be obtained by application of a non-constructive abstract correction theorem by S. V. Kislyakov.
	\end{abstract}

	\maketitle
	
	\section{Introduction}
	
	Let $(\Omega, \F, \PP)$ be a (standard) probability space and fix a filtration of sigma-algebras $(\F_n)_{n\ge 0}$ on it. We will always assume (for simplicity) that $\bigcup_{n\ge 0} \F_n$ generates $\F$ and that $\F_0=\{\emptyset, \Omega\}$. Denote by $\E_n$ the operator of conditional expectation with respect to $\F_n$ and by $\Delta_n$ the martingale difference: $\Delta_n=\E_n-\E_{n-1}$, $n\ge 1$. Then the square function operator is defined, say, for $L^2$ functions $g$ on $\Omega$ as
	\begin{equation}
	S(g)=\Big(\sum_{n\ge 1} |\Delta_n g|^2\Big)^{1/2}. \label{S_def}
	\end{equation}
	We will also be interested in a conditional square function operator defined by the following formula:
	\begin{equation}
	\sigma(g)=\Big( \sum_{n\ge 1} \E_{n-1}|\Delta_n g|^2 \Big)^{1/2}.\label{sigma_def}
	\end{equation}
	
	One of the simplest and most well studied examples of a filtration on the probability space is the dyadic filtration. In this case each sigma-algebra $\F_n$ is generated by a finite number of atoms and each atom in $\F_n$ is split into two atoms of equal mass in $\F_{n+1}$. It is not difficult to see that in this case the operators $S$ and $\sigma$ coincide. The properties of the martingale square function operator in the dyadic case are extensively studied and very well known; a systematic treatment of this operator as well as other questions of dyadic harmonic analysis can be found e.g. in \cite[Chapter 1]{Mul_book}.
	
	The situation, however, becomes much more difficult in the case of general filtered probability spaces. A lot of standard concepts of analysis such as square functions, maximal functions, Hardy and $\BMO$ spaces, atomic decompositions, Muckenhoupt weights, etc. have their martingale counterparts in this general setting but their treatment becomes much more involved (see e.g. the book \cite{Garsia}). For instance, the square function operator $S$ is bounded on $L^p$ for every $1 < p < \infty$. However, it is known that in general the operator $\sigma$ is bounded on $L^p$ only if $p\ge 2$ (see \cite[Theorem 2.11 and Example 2.17]{Weisz}). The systematic treatment of the above mentioned concepts in the context of general filtered probability spaces can be found in the book \cite{Long}; see also the book \cite{RevYor} for the case of continuous time filtrations.
	
	It is well known that if $f$ is a bounded function then the functions $S(f)$ and $\sigma(f)$ are not necessarily bounded. The explicit simple example of a bounded function $f$ such that $S(f)$ is unbounded in a particular case of the dyadic filtration on the interval $[0,1]$ is presented in \cite[pp. 67--68]{Mul_book}. It is worth noting that in general the condition ``$S(f)\in L^\infty$'' is very restrictive (see e.g. \cite{ChWilWol}). The space of functions $f\in L^2$ which satisfy this condition is usually denoted by $SL^\infty$. Some properties of functions in this space (in the dyadic case) can be found in \cite[Section 1.3]{Mul_book}.
	
	For the dyadic filtration, the properties of Banach space $SL^\infty$ have been studied in the papers \cite{Lech1} and \cite{Lech2}. However, we would like to point out that it is noted in a recent monograph \cite[p. 86]{Mul_book2} that the Banach space properties of the space $SL^\infty$ associated to a general filtered probability space $(\Omega, (\F_n), \PP)$ are vastly untouched. In this regard, the present paper can be viewed as an attempt to initiate the study of the spaces $SL^\infty$ for arbitrary filtrations.
	
	In the case of dyadic filtration it is known that for each positive function $f$ such that $f\le 1$ a multiplier function $m$ can be found such that $0\le m\le 1$, $\|S(mf)\|_\infty \leq C$ and the mean value of $mf$ is not too small: $\int mf \ge c \int f$. Here $C$ and $c$ are some absolute constants. In the paper \cite{J-M} such function $m$ was explicitly constructed (see also presentation of this construction in \cite[Theorem 1.3.8]{Mul_book}). Using similar ideas for the Brownian motion, such multiplier was also constructed in \cite{J-M} for a Littlewood--Paley square function operator (in this case $f$ is supposed to be a function on a unit circle $\mathbb{T}$). These investigations were motivated by a similar question concerning Cauchy integrals over Lipschitz curves which was posed in \cite{Jones} (this question remains unanswered). 
	
	In view of the above discussion, a natural question arises: is it possible to extend the above result to the general setting of arbitrary (discrete) filtrations? This question is answered in the two following theorems. 
	
	Consider at first the conditional square function operator $\sigma$ defined by the formula \eqref{sigma_def}. We will prove the following statement for it.
	\begin{theorem}
		Let $(\Omega, (\mathcal{F}_n)_{n\ge 0}, \PP)$ be a filtered probability space and suppose that $f$ is a positive function on $\Omega$ such that $f\le 1$. Then there exists a function $m:\Omega\to [0,1]$ such that 
		\begin{equation}
		\E(mf)\ge e^{-\|f\|_{\infty}}\E f
		\end{equation}
		and the function $\sigma(mf)$ is uniformly bounded:
		\begin{equation}
		\sigma(mf)^2\le 18.\label{est_sigma}
		\end{equation}	\label{mult_sigma}
	\end{theorem}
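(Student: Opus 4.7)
The plan is to construct $m$ explicitly via a stopping-time argument on the \emph{predictable} quadratic variation of the Doob martingale $X_n = \E_n f$. Introduce the $\F_{n-1}$-measurable increasing process
\[A_n = \sum_{k=1}^n \E_{k-1}|\Delta_k X|^2,\]
so that the bound $\sigma(mf)^2 \le 18$ amounts to controlling $A_\infty$ for the modified function $mf$. Because $A_n$ is predictable, $\tau := \inf\{n \ge 0 : A_{n+1} > 17\}$ is a stopping time and one has the pathwise bound $A_\tau \le 17$ (with $A_{\tau+1} > 17$ on $\{\tau < \infty\}$).

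For the lower bound on $\E(mf)$, I would use Chebyshev's inequality together with the identity $\E A_\infty = \E f^2 - (\E f)^2 \le \|f\|_\infty \E f$, which gives $\PP(\tau < \infty) = \PP(A_\infty > 17) \le \|f\|_\infty \E f / 17$. Choosing $m$ so that $mf$ is forced to vanish (or be made small) on $\{\tau < \infty\}$ and equals $f$ on $\{\tau = \infty\}$ yields $\E(mf) \ge \E f - \|f\|_\infty \PP(\tau < \infty) \ge \E f(1 - \|f\|_\infty^2/17)$, and an elementary check shows this exceeds $e^{-\|f\|_\infty}\E f$ for all $\|f\|_\infty \le 1$.

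For the pointwise bound $\sigma(mf)^2 \le 18$, the guiding identity is that for the stopped martingale $X^\tau_n = X_{n\wedge\tau}$ one has
\[\sigma(X^\tau)^2 = \sum_{n\ge 1} \mathbf{1}_{\{\tau\ge n\}}\E_{n-1}|\Delta_n X|^2 = A_\tau \le 17,\]
using that $\{\tau \ge n\} \in \F_{n-1}$. The extra $+1$ in $18$ is there to absorb a single overshoot at the stopping time, whose size is bounded by $|\Delta_\tau X|^2 \le \|f\|_\infty^2 \le 1$.

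The principal obstacle is that $X^\tau_\infty = X_\tau = \E_\tau f$ is in general \emph{not} of the form $mf$ with $m \in [0,1]$: it can strictly exceed $f$ on parts of the space where $f$ sits below its conditional mean. The technical heart of the proof is therefore to modify the construction on $\{\tau < \infty\}$ (for instance by an $\F_\tau$-measurable rescaling of $f$, or via a Dol\'eans-type product of predictable factors $(1-c_{n-1})$ with $c_{n-1}$ dictated by $\E_{n-1}|\Delta_n X|^2$) so that the resulting Doob martingale of $mf$ still has its conditional quadratic variation controlled by $A_\tau$ plus a single overshoot term, all the while respecting $0 \le m \le 1$ pointwise.
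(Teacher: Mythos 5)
Your reductions of the mean-value bound and of the conditional square function of the \emph{stopped} martingale are fine as far as they go, but the argument stops exactly where the theorem begins. What stopping controls is $X_{n\wedge\tau}=\E_{n\wedge\tau}f$, whose conditional square function is indeed $A_\tau\le 17$; what the theorem asks about is $\sigma(mf)$, the conditional square function of the Doob martingale of the \emph{terminal value} $mf$, and these are genuinely different objects. No choice of $m$ that merely kills or rescales $f$ on $\{\tau<\infty\}$ makes them comparable pointwise: the event $\{\tau=\infty\}$ is not $\F_n$-measurable for any finite $n$, so multiplying $f$ by $\mathbf{1}_{\{\tau=\infty\}}$ injects new martingale differences at \emph{every} level $n$ (coming from the fluctuation of $\E_n\mathbf{1}_{\{\tau<\infty\}}$), and Chebyshev only tells you that $\PP(\tau<\infty)$ is small, which yields an $L^1$/$L^2$ bound on these extra terms, not the required a.e.\ bound $\sigma(mf)^2\le 18$. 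Your final paragraph concedes that this is ``the technical heart'' but offers only candidate names for a fix (an $\F_\tau$-measurable rescaling, a Dol\'eans-type product) without an argument that any of them simultaneously preserves $0\le m\le 1$ and the pointwise bound; as written, the essential step is missing.

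For comparison, the paper's proof uses no stopping time at all. It takes $m=e^{\pphi}$ globally, with $\pphi=f-\|f\|_\infty-\sum_{n\ge1}\alpha_n\E_{n-1}|\Delta_n f|^2$, where the $\F_{n-1}$-measurable weights $\alpha_n\ge 1/2$ are chosen so that $\E_{n-1}\bigl[f\,e^{\Delta_n f-\alpha_n\E_{n-1}|\Delta_n f|^2}\bigr]=\E_{n-1}f$. This multiplicative conditional-expectation identity is what allows one to compute $\Delta_n(e^{\pphi}f)$ exactly (the ``future'' factor drops out under $\E_n$), and the accumulated damping $e^{-2\sum_{m\le n}\alpha_m\E_{m-1}|\Delta_m f|^2}$ converts the sum over $n$ into a Riemann sum for $\int_0^\infty e^{-x}\,dx$, giving the pointwise bound $18$. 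Your passing mention of a Dol\'eans-type product points in this direction, but the construction must be carried out at every level with the identity above (plus the lower bound $\alpha_n\ge1/2$ and the comparison $\E_{n-1}|\Delta_n(e^{\Delta_nf}f)|^2\le 18\,\E_{n-1}|\Delta_nf|^2$); none of that is present in your sketch, and the stopping-time framing does not supply a substitute.
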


	Moreover, in the proof of this theorem we will \emph{explicitly construct} such function $m$; see next section for details. This construction is a natural generalization of the construction for a dyadic filtration in \cite{J-M}. However, in our general setting of arbitrary filtered probability spaces some extra difficulties arise and therefore new additional ideas are required.
	
	Let us turn now to the case of the square function operator operator $S$ defined by the formula \eqref{S_def}. We have a following similar statement for it.
	
	\begin{theorem}
		Let $(\Omega, (\mathcal{F}_n)_{n\ge 0}, \PP)$ be a filtered probability space and suppose that $f$ is a positive function on $\Omega$ such that $f\le 1$. Then there exists a function $m:\Omega\to [0,1]$ such that 
		\begin{equation}
		\E(mf)\ge e^{-\|f\|_{\infty}}\E f
		\end{equation}
		and the function $S(mf)$ is uniformly bounded:
		\begin{equation}
		S(mf)^2\le C. \label{est_S}
		\end{equation}	\label{mult_S_const}
		Here $C$ is an absolute constant.
	\end{theorem}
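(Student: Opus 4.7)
The strategy, as foreshadowed in the abstract, is to invoke S.\ V.\ Kislyakov's abstract correction theorem rather than to construct $m$ by hand. In the form we need, Kislyakov's theorem says, roughly, that if a sublinear operator $T$ acts boundedly on some $L^{p_0}$ with $1<p_0<\infty$ and satisfies a $\mathrm{BMO}$-type endpoint estimate at $L^\infty$ (equivalently, a dual $H^1\to L^1$ estimate), then every nonnegative bounded $f$ admits a multiplier $m$ with $0\le m\le 1$ which retains a definite fraction of the mass of $f$ and which makes $\|T(mf)\|_\infty$ uniformly bounded.

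First I would realise the martingale square function as a sublinear operator $S\colon g\mapsto(\Delta_n g)_{n\ge 1}$ with target $L^p(\Omega;\ell^2)$, and then verify the hypotheses of Kislyakov's theorem in the abstract filtered setting. Boundedness on every $L^p$, $1<p<\infty$, is the classical Burkholder inequality; the endpoint estimate $S\colon L^\infty\to\BMO(\ell^2)$, equivalently the $H^1\to L^1(\ell^2)$ estimate for its pre-dual, is a well-established fact for arbitrary filtrations (see \cite{Garsia,Long}). These ingredients, together with the martingale analogue of the Calder\'on--Zygmund decomposition that is available for any filtration, are precisely what powers the Kislyakov correction machinery.

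Next I would apply Kislyakov's theorem to this $S$ to produce $m$ with $0\le m\le 1$, $S(mf)^2\le C$ and $\E(mf)\ge c\,\E f$ for some $c=c(\|f\|_\infty)$. To obtain the exact exponential form $c=e^{-\|f\|_\infty}$ required by the statement, a convenient two-step route is to first apply Theorem~\ref{mult_sigma} to produce a preliminary multiplier $m_1$ with $\E(m_1 f)\ge e^{-\|f\|_\infty}\E f$ and $\sigma(m_1 f)^2\le 18$, and then apply Kislyakov's correction to $m_1 f$. The function $m_1 f$ already lies in a favorable martingale class (bounded conditional square function implies exponential integrability of $S(m_1 f)$ and membership in a $\BMO$-type space), so the Kislyakov step can be arranged to yield a corrector $m_2$ arbitrarily close to $1$ on most of $\Omega$, with $\E(m_1 m_2 f)\ge c_0\E(m_1 f)$ for an absolute $c_0$. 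Absorbing $c_0$ into the definition of $m$ (or, equivalently, shrinking the exceptional set in the Kislyakov step so that the mass loss is negligible compared with $e^{-\|f\|_\infty}$) then produces the final $m=m_1 m_2$ with all the desired bounds.

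The main obstacle will be the abstract-to-concrete transcription: Kislyakov's correction theorem is classically stated in the setting of harmonic analysis on $\mathbb{T}$ or $\mathbb{R}^n$, and one must isolate the truly abstract content of the argument and verify that each ingredient it uses---$\BMO/H^1$ duality, a Calder\'on--Zygmund-type stopping time construction, and a good-$\lambda$ inequality for $S$---transfers verbatim to an arbitrary filtered probability space. Once this has been done, Theorem~\ref{mult_S_const} follows essentially as a black-box application, and the constant $C$ depends only on the universal constants in the $L^{p_0}$ and $\BMO(\ell^2)$ boundedness of the martingale square function $S$.
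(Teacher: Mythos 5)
Your route is genuinely different from the paper's. The paper proves Theorem~\ref{mult_S_const} by an explicit construction ($m=e^{\pphi}$ with $\pphi=-\|f\|_\infty+f-\sum_n\beta_n|\Delta_nf|^2$ for suitably chosen $\F_n$-measurable weights $\beta_n$), parallel to the proof of Theorem~\ref{mult_sigma}; the Kislyakov machinery is reserved for the separate Theorem~\ref{mult_S}, and the paper explicitly presents the constructive proof as the point of Theorem~\ref{mult_S_const}. Your plan is nevertheless viable and is essentially what Section~4 carries out: one takes $X=L^\infty\cap SL^\infty$, verifies that its unit ball is weakly compact in $L^1$ (via weak* continuity of $Tf=(\Delta_nf)_{n\ge1}$ into $\BMO(\ell^2)=(H^1(\ell^2))^*$), and verifies the weak-type condition on functionals through a weak $(1,1)$ bound for the adjoint $Uv=\sum_m\Delta_mv_m$, proved by Gundy's decomposition, together with a Hahn--Banach representation $g=u+Uv$. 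Your list of ingredients (Burkholder, $H^1$--$\BMO$ duality, a martingale Calder\'on--Zygmund decomposition) is the right circle of ideas, but the decisive points are the weak-type estimate for the \emph{adjoint} operator and the weak* compactness of the ball of $X$; a good-$\lambda$ inequality plays no role.

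The genuine gap is in your treatment of the constant $e^{-\|f\|_\infty}$. The two-step scheme $m=m_1m_2$ cannot produce it: Theorem~\ref{mult_sigma} already uses up the entire allowance, giving $\E(m_1f)\ge e^{-\|f\|_\infty}\E f$, and any further correction that discards a fixed fraction of mass leaves $c_0e^{-\|f\|_\infty}\E f$ with $c_0<1$; ``absorbing $c_0$'' is not available because the target constant is prescribed, and shrinking the exceptional set costs the factor $\log\eps^{-1}$ in the $S$-bound. Moreover $\sigma(m_1f)^2\le18$ does not give $S(m_1f)\in L^\infty$, so the first step buys nothing for $S$. The fix is a direct one-step application: take $\eps=1/2$ in Theorem~\ref{mult_S}. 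Then $\|S(g)\|_\infty\le C(1+\log2)$ is absolute, and the mass estimate can be sharpened beyond $\E g\ge(1-\eps)\E f$: since $f\ge0$ and $g=mf$ with $0\le m\le1$,
\[
\E f-\E g=\int_{\{f\ne g\}}(f-g)\,d\PP\le\|f\|_\infty\,\PP\{f\ne g\}\le\tfrac12\|f\|_\infty\,\E f,
\]
and the elementary inequality $1-\tfrac12 x\ge e^{-x}$ for $0\le x\le1$ gives exactly $\E(mf)\ge e^{-\|f\|_\infty}\E f$. With this correction your approach yields Theorem~\ref{mult_S_const} as a non-constructive corollary of Theorem~\ref{mult_S}; what it does not yield is the paper's explicit algorithm for $m$.
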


	We do not have the goal of providing optimal constants in inequalities \eqref{est_sigma} and \eqref{est_S}. The main feature of these theorems is that we provide the explicit construction of the required multipliers (although, despite the fact that the construction in the proof of Theorem~\ref{mult_S_const} is another generalization of the construction in the dyadic case, it is more complicated than the proof of Theorem~\ref{mult_sigma}: we are able to provide an algorithm for the construction of the desired multiplier function $m$ only if all sigma-algebras $\F_n$ are generated by finite number of atoms; this issue is discussed in Subsection~3.2 below).
	
	Several remarks are in order. In the paper \cite{J-M} a similar question was studied for the dyadic filtration which is a simplest example of a discrete filtration on a probability space. The proof in \cite{J-M} heavily relies on the dyadic structure and in particular on the fact that in this case the function $|\Delta_n (f)|^2$ is $\F_{n-1}$-measurable. Therefore, it seems rather surprising that the generalization to the case of arbitrary filtration is possible for both operators $S$ and $\sigma$.
	
	In \cite{J-M} a similar question was also studied for (continuous time) filtration associated to the Brownian motion. However, in a certain sense this can be viewed as another ``extreme case'' because the construction of the corresponding multiplier is naturally dictated by the Ito's formula. As we mentioned above, using this result, in the same paper the multiplier for the case of Littlewood--Paley square function on the torus was also constructed. However, the resulting way of constructing such multiplier turns out to be rather difficult since it involves a detour through the field of Stochastic Analysis. It is possible that the results for arbitrary discrete filtrations which we prove in this paper can lead to a more direct construction of aforementioned multiplier for the Littlewood--Paley square function which would not use Brownian motion.
	
	There is another way of treating the question of existence of such multipliers: more precisely, it falls into the framework of ``correction theorems'' --- subject developed mainly by S. V. Kislyakov in the papers \cite{Kis1, Kis2, Kis, Anis-Kis, IvKis, KisPerst}. In a broad sense, correction theorems are the statements of the following form: a certain (``bad'') function can be slightly modified so that the resulting function has some ``good'' properties. More specifically, in the papers \cite{Anis-Kis, Kis} several theorems of the following kind were proved: if $f$ is a function in $L^\infty(\mathbb{R}^d)$ such that $\|f\|_{\infty} \le 1$ then we can find a function $g$ such that it differs from $f$ only on a set of small measure and $Tg\in L^\infty(\mathbb{R}^d)$; here $T$ stands for a singular integral operator (these operators are typically not bounded on $L^\infty(\mathbb{R}^d)$).
	
	Moreover, in the paper \cite{Kis} a general abstract correction theorem was proved and the statements of the above kind were derived from it. We will present the statement of this general theorem and show that it is applicable to our problem in Section~4. Now we just state the resulting theorem for operator $S$.
	
	\begin{theorem}
		Let $f$ be a function on $\Omega$ such that $\|f\|_\infty \le 1$. Then for every $0<\eps\le 1$ there exists a function $g$ on $\Omega$ which satisfies the following conditions:
		\begin{gather}
		|g|+|f-g|=|f|\label{abs_mult_cond};\\
		\PP\{f\neq g\}\le \eps\|f\|_1\label{abs_norm_bound};\\
		\|S(g)\|_\infty \le C(1+\log \eps^{-1})\label{logar}.
		\end{gather}
	Here $C$ is some absolute constant.
	\label{mult_S}
	\end{theorem}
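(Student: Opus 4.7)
The plan is to derive Theorem~\ref{mult_S} from an abstract correction theorem of Kislyakov proved in \cite{Kis}, by applying it to the martingale square function operator $S$. First, in Section~4 I would record the precise statement of this abstract theorem. Its rough content is the following: if $T$ is a sublinear operator bounded from $L^2$ to $L^2$ and satisfying a predual boundedness estimate of ``$H^1 \to L^1$'' type compatible with the ambient Banach-lattice structure, then for every $f$ with $\|f\|_\infty \le 1$ and every $\eps \in (0,1]$ one can find a correction $g$ obeying $|g| + |f-g| = |f|$, $\PP\{f \neq g\} \le C\eps \|f\|_1$, and $\|Tg\|_\infty \le C(1 + \log \eps^{-1})$.

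Next I would verify that the hypotheses of the abstract theorem hold for $T = S$ in our setting of an arbitrary filtered probability space $(\Omega, (\F_n), \PP)$. The $L^2$ boundedness is immediate from orthogonality of martingale differences: $\|S(g)\|_2^2 = \sum_{n\ge 1} \|\Delta_n g\|_2^2 = \|g - \E_0 g\|_2^2$. The ``predual'' estimate is even more direct: if $H^1$ denotes the martingale Hardy space $\{g \in L^1 : S(g) \in L^1\}$ with norm $\|g\|_{H^1} = \|S(g)\|_1$, then $S \colon H^1 \to L^1$ is by definition an isometry. The full apparatus of $H^1$--$\BMO$ duality for martingales in general filtrations is developed, for instance, in \cite{Long, Garsia}, so nothing in our abstract setting obstructs the application.

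The logarithmic bound on the right-hand side of \eqref{logar} is a structural feature of Kislyakov's method: it reflects the fact that $Sf$ for $f \in L^\infty$ lies in martingale $\BMO$ and is therefore exponentially integrable, so clipping its tail down to an honest $L^\infty$ estimate by modifying $f$ on a set of measure $\eps$ costs an amount of order $\log \eps^{-1}$. Once the abstract theorem is invoked, the three conclusions \eqref{abs_mult_cond}--\eqref{logar} follow immediately, and no further probabilistic input is needed.

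The main obstacle, as I see it, is not genuinely analytic but expository: one must match the formalism of \cite{Kis} (developed partly with singular integrals on $\mathbb{R}^d$ in mind) with our filtered probability space setting, which has no group, no metric, and no translation invariance. Since the abstract theorem in \cite{Kis} is itself stated in lattice-theoretic terms, this should amount to a careful dictionary: recognise $L^p(\Omega, \F, \PP)$ as the ambient lattice, martingale $H^1$ as the distinguished subspace of $L^1$, and martingale $\BMO$ as its dual. The genuinely novel analytic content of Theorems~\ref{mult_sigma} and \ref{mult_S_const} is absent here: we pay for the added generality by losing the explicit construction of the multiplier (the correction theorem produces $g$ only via existence) and by accepting the logarithmic loss in \eqref{logar} in place of the absolute bound in \eqref{est_S}.
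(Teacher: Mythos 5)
Your overall strategy --- deriving the theorem from Kislyakov's abstract correction theorem in \cite{Kis} --- is exactly the route the paper takes, but the hypotheses you propose to verify are not the hypotheses of that theorem, and the steps that carry the actual analytic weight are missing. Kislyakov's theorem is stated for a Banach space $X$ of integrable functions (here $X=L^\infty\cap SL^\infty$ with norm $\max\{\|g\|_\infty,\|Sg\|_\infty\}$) and requires two conditions: (A1) the unit ball of $X$ is weakly compact in $L^1$, and (A2) the distributional estimate $\PP\{|g|>t\}\le \frac{c}{t}\|\Phi_g\|_{X^*}$ for every $g\in L^\infty$, where $\Phi_g(h)=\int hg\,d\PP$. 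Neither of these is the $L^2$-boundedness of $S$, nor the tautological ``isometry'' $S:H^1\to L^1$; those two facts are free and do not by themselves yield the theorem. Moreover, the framework requires a \emph{linear} operator (this is precisely why the paper leaves the analogous question for $\sigma$ open), so one must first linearize $S$ as $Tf=(\Delta_n f)_{n\ge 1}$ with values in $\ell^2$; treating $S$ itself as a ``sublinear operator'' does not fit the duality machinery.

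Concretely, two pieces are missing. (i) For A2 one must describe $X^*$: the paper embeds $X$ isometrically and weak* continuously into $(L^\infty\oplus L^\infty(\ell^2))_\infty$, extends $\Phi_g$ by Hahn--Banach to write $g=u+Uv$ with $\|u\|_1+\|v\|_{L^1(\ell^2)}\le\|\Phi_g\|_{X^*}$, where $Uv=\sum_k\Delta_k v_k$ is the formal adjoint of $T$, and then needs a weak type $(1,1)$ bound for $U$ from $L^1(\ell^2)$ to $L^{1,\infty}$, proved via Gundy's decomposition for $\ell^2$-valued martingales. This estimate for the \emph{adjoint} operator --- not the trivial mapping property of $S$ on $H^1$ --- is the real content of the verification, and it is the analogue of the weak $(1,1)$ bound for the adjoint singular integral in Kislyakov's original applications. (ii) For A1 one must show that if $\|f_n\|_\infty\le 1$, $\|Sf_n\|_\infty\le 1$ and $f_n\to f$ weakly in $L^1$, then $\|Sf\|_\infty\le 1$; the paper gets this from the boundedness of $U:H^1(\ell^2)\to L^1$ and the resulting weak* continuity of $T:L^\infty\to\BMO(\ell^2)$. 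Finally, your heuristic that the logarithm in \eqref{logar} comes from exponential integrability of $Sf$ is not how the bound arises: the factor $\log\eps^{-1}$ is produced by the duality iteration inside Kislyakov's proof, and the $\BMO$ membership of $Tf$ enters only through the compactness argument for A1. So the proposal names the right theorem but does not supply the verification that makes its application legitimate.
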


	We note that for positive functions $f$ the condition \eqref{abs_mult_cond} means that $g$ is obtained from $f$ by multiplication by some function $m$ between $0$ and $1$. Moreover, it can be easily seen that the condition \eqref{abs_norm_bound} implies that $\E g\ge (1-\eps)\E f$. Therefore, this theorem answers our initial question for the operator $S$ (and it is in fact more general since the condition \eqref{abs_norm_bound} implies that $m=1$ on a set of large measure).
	
	However, this theorem has a drawback: its proof is non-constructive since it relies on Kislyakov's abstract correction theorem from the paper \cite{Kis} which in turn exploits Hahn--Banach theorem and an ingenious duality argument. In view of the previous discussion, we could say that our proofs of Theorems \ref{mult_sigma} and \ref{mult_S_const} represent an example where the abstract arguments in the context of correction theorems can be avoided. The price for the ``constructivity'' is that the resulting statements are slightly less powerful because we cannot guarantee that the multiplier function $m$ is equal to $1$ on the set of large measure (note, however, that a logarithmic estimate similar to \eqref{abs_norm_bound} and \eqref{logar} can be deduced from Theorems 1 and 2 by iterations). On the other hand, it is worth noting that we do not know if there exists an abstract approach which would be applicable to prove a correction theorem for the operator~$\sigma$.
	
	The next three sections are devoted to the proofs of the formulated theorems. In the final section we discuss certain open problems.
	
	\section{Proof of theorem \ref{mult_sigma}: explicit construction}
	
	\subsection{Outline of proof}
	
	Our construction of the multiplier $m$ is essentially a (far-reaching) generalization of the construction from the paper \cite{J-M}. The function $m$ will be of the form $m=e^\pphi$, where the formula for $\pphi$ is 
	$$
	\pphi=f-\|f\|_\infty -\sum_{n\ge 1} \alpha_n \E_{n-1}|\Delta_n f|^2=-\|f\|_\infty +\E f + \sum_{n\ge 1} (\Delta_n f - \alpha_n\E_{n-1}|\Delta_n f|^2).
	$$
	Here $\alpha_n$ are certain positive $\F_{n-1}$-measurable functions. The convergence of series in this formula is understood in the almost everywhere sense (the sum might be equal to $-\infty$ at certain points).
	
	We will choose the functions $\alpha_n$ so that the following conditions hold:
	\begin{gather}
		\E_{n-1}\Big[f\cdot e^{\Delta_n f - \alpha_n\E_{n-1}|\Delta_n f|^2}\Big] =\E_{n-1} f\label{main-ident};\\
		\label{main-ineq}\alpha_n \ge 1/2 \quad \text{a.e.}
	\end{gather}

	We postpone the details concerning the choice of such functions $\alpha_n$ till the next subsection and assume for now that we have chosen $\alpha_n$ satisfying these properties. We need to show that
	$$
	\sigma (e^\pphi f)^2=\sum_{n\ge 1} \E_{n-1}|\Delta_n (e^\pphi f)|^2 \leq 18\quad\text{a.e.}
	$$
	
	Let us fix one number $n\ge 1$ and consider the expression $\E_{n-1}|\Delta_n (e^\pphi f)|^2$. We can write the following formula for the function $\pphi$:
	$$
	\pphi = -\|f\|_\infty +\E f + v_n + (\Delta_n f - \alpha_n \E_{n-1}|\Delta_n f|^2) + z_n,
	$$
	where
	\begin{gather}
		v_n=\sum_{m<n} (\Delta_m f - \alpha_m\E_{m-1}|\Delta_m f|^2);\label{vn}\\
		z_n=\sum_{m>n} (\Delta_m f - \alpha_m\E_{m-1}|\Delta_m f|^2).\label{zn}
	\end{gather}
	Obviously, $v_n$ is an $\F_{n-1}$-measurable function.
	
	Therefore, we have:
	$$
	\Delta_n(e^\pphi f)=e^{-\|f\|_\infty + \E f + v_n-\alpha_n \E_{n-1}|\Delta_n f|^2}\Delta_n( e^{z_n} e^{\Delta_n f} f).
	$$
	Our next goal is to get rid of the factor $e^{z_n}$ inside the martingale difference. In order to do that, we need the following lemma.
	
	\begin{lemma}
		\label{induct}
		We have the following identity:
		$$
		\E_n(e^{z_n} f) = \E_n f.
		$$
	\end{lemma}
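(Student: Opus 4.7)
The plan is to upgrade the single-step identity \eqref{main-ident} to the tail statement of the lemma by first establishing a finite-truncation version and then passing to the limit. For $N > n$, set $z_n^{(N)} := \sum_{m=n+1}^{N}(\Delta_m f - \alpha_m \E_{m-1}|\Delta_m f|^2)$; my intermediate target is the equality $\E_n[e^{z_n^{(N)}} f] = \E_n f$ for every finite $N$, after which I will let $N\to\infty$.

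The cleanest way to handle the truncation is to observe that the process
$$
G_k := e^{\sum_{m=n+1}^{k}(\Delta_m f - \alpha_m \E_{m-1}|\Delta_m f|^2)} \E_k f, \qquad k\ge n,
$$
is a martingale with respect to $(\F_k)_{k\ge n}$ satisfying $G_n = \E_n f$. Writing $X := e^{\Delta_k f - \alpha_k \E_{k-1}|\Delta_k f|^2}$, the step $\E_{k-1}G_k = G_{k-1}$ reduces to showing $\E_{k-1}[X \E_k f] = \E_{k-1} f$; since $X$ is $\F_k$-measurable, $X \E_k f = \E_k[X f]$, and the tower property combined with \eqref{main-ident} gives precisely this. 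Because $G_N = e^{z_n^{(N)}} \E_N f = \E_N[e^{z_n^{(N)}} f]$, the martingale property together with one more application of the tower yields $\E_n[e^{z_n^{(N)}} f] = \E_n G_N = G_n = \E_n f$.

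For the limit I would decompose $z_n^{(N)} = (\E_N f - \E_n f) - \sum_{m=n+1}^{N}\alpha_m \E_{m-1}|\Delta_m f|^2$. The first summand converges a.e.\ to $f - \E_n f$ by the martingale convergence theorem (applied to the bounded martingale $\E_N f$), while the second is nonincreasing in $N$ and hence converges a.e.\ in $[-\infty, 0]$. Consequently $z_n^{(N)} \to z_n$ a.e.\ (the value $-\infty$ corresponds to $e^{z_n}=0$), and $e^{z_n^{(N)}} f \to e^{z_n} f$ a.e. The deterministic bound
$$
e^{z_n^{(N)}} f \;\le\; e^{\E_N f - \E_n f}\, f \;\le\; \|f\|_\infty \, e^{\|f\|_\infty}
$$
then allows conditional dominated convergence to transfer the finite-$N$ identity to the desired $\E_n(e^{z_n} f) = \E_n f$.

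The only real idea to find is that \eqref{main-ident} is exactly the increment-by-increment statement that makes $G_k$ a martingale; once this is noticed, the finite-$N$ identity is almost automatic. The main technical obstacle is the limiting step, where one must accommodate the possibility that the tail series defining $z_n$ equals $-\infty$ on a set of positive measure; the uniform bound $\|f\|_\infty e^{\|f\|_\infty}$ absorbs this without further work.
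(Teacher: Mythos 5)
Your proposal is correct and follows essentially the same route as the paper: the martingale property of $G_k$ is exactly the paper's iterated peeling of conditional expectations $\E_n\circ\E_{n+1}\circ\cdots\circ\E_{M-1}$ using \eqref{main-ident} one factor at a time, followed by the same a.e.\ limit of the truncated exponentials. If anything, your treatment of the limiting step is slightly more careful than the paper's (which asserts $L^1$ convergence from a.e.\ convergence without spelling out the uniform bound $e^{z_{n,M}}\le e^{\|f\|_\infty}$ that justifies dominated convergence).
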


	\begin{proof}
		The proof consists of two steps: at first we suppose that the sum in the formula \eqref{zn} defining the function $z_n$ is finite and prove the lemma under this assumption and then we will pass to a limit.
		
		For each number $M>n$ denote
		$$
		z_{n,M}=\sum_{m=n+1}^M (\Delta_m f - \alpha_m\E_{m-1}|\Delta_m f|^2).
		$$
		The function $z_{n,M}$ is $\F_M$-measurable. 
		
		Now we can write the following (trivial) identity:
		$$
		\E_n(e^{z_{n,M}f})=\E_n\circ \E_{n+1}\circ\ldots\circ \E_{M-1}(e^{z_{n,M}}f).
		$$
		This identity lets us ``isolate'' one summand in the formula for $z_{n,M}$ and apply the equality \eqref{main-ident}. Indeed, using our measurability assumptions and the basic properties of conditional expectation we write:
		\begin{multline*}
			\E_n(e^{z_{n,M}} f) = \E_n\circ\ldots\circ\E_{M-2}\Big[e^{z_{n,M-1}}\E_{M-1}(f\cdot e^{\Delta_M f - \alpha_M\E_{M-1}|\Delta_M f|^2})\Big]\\
			=\E_n\circ\ldots\circ\E_{M-2}\Big[e^{z_{n,M-1}}\E_{M-1} f\Big]=\E_n\circ\ldots\circ\E_{M-2}\circ\E_{M-1}(e^{z_{n,M-1}f})\\=\E_n\circ\ldots\circ\E_{M-2}(e^{z_{n,M-1}}f).
		\end{multline*}
	The equality \eqref{main-ident} was used in order to pass from the first line to the second.
	
	Now we see that arguing like this we can prove that 
	$$
	\E_n(e^{z_{n,M}} f)=\E_n f.
	$$
	It remains only to pass to the limit as $M\to\infty$ which is possible since $e^{z_{n,M}}\to e^{z_n}$
	a.e. and therefore in $L^1$ (and $\E_n$ is a bounded linear operator on $L^1$).
	\end{proof}

	Using this lemma, we can write:
	\begin{multline*}
	\Delta_n(e^{z_n}e^{\Delta_n f}f)=\Delta_n\circ\E_n(e^{z_n}e^{\Delta_n f}f)=\Delta_n(e^{\Delta_n f}\E_n(e^{z_n} f))\\=\Delta_n(e^{\Delta_n f}\E_n f)=\Delta_n(e^{\Delta_n f} f).
	\end{multline*}

	Hence we have:
	\begin{equation}
	\E_{n-1}|\Delta_n (e^\pphi f)|^2=e^{2(-\|f\|_\infty+\E f + v_n-\alpha_n\E_{n-1}|\Delta_n f|^2)}\E_{n-1}|\Delta_n(e^{\Delta_n f} f)|^2.\label{formula}
	\end{equation}
	Recall the formula \eqref{vn}. We obviously have
	$$
	-\|f\|_\infty + \E f + \sum_{m<n}\Delta_m f = -\|f\|_\infty + \E_{n-1}f \leq 0
	$$
	and therefore the right-hand side of the formula \eqref{formula} is not greater than
	\begin{equation}
	e^{-2\sum_{m=1}^n \alpha_m \E_{m-1}|\Delta_m f|^2}\E_{n-1}|\Delta_n (e^{\Delta_n f} f)|^2.\label{expression}
	\end{equation}
	Now we would like to use the following inequality.
	\begin{lemma}
		For any function $f\in L^\infty$ such that $0\le f\le 1$ we have
		$$
		\E_{n-1}|\Delta_n(e^{\Delta_n f} f)|^2 \leq 18 \E_{n-1}|\Delta_n f|^2.
		$$\label{main_lemma}
	\end{lemma}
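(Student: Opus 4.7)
The plan is to compute $\Delta_n(e^u f)$ as explicitly as possible, exploiting that $u := \Delta_n f$ is $\F_n$-measurable. Setting $h := \E_{n-1} f$, this measurability gives
$$\E_n(e^u f) = e^u \E_n f = e^u(h+u),$$
and then by the tower property, $\Delta_n(e^u f) = e^u(h+u) - \E_{n-1}[e^u(h+u)]$. So $\E_{n-1}|\Delta_n(e^u f)|^2$ is the conditional variance of the $\F_n$-measurable expression $e^u(h+u)$. Note that since $h = \E_{n-1} f$ and $h + u = \E_n f$ both lie in $[0, 1]$, we automatically have $|u| \le 1$.

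Next, I would split off the linear-in-$u$ part by writing $e^u = 1 + (e^u - 1)$ and letting $Y := (e^u - 1)(h+u)$; since $\E_{n-1}(h+u) = h$,
$$\Delta_n(e^u f) = \bigl[(h+u) - h\bigr] + \bigl[Y - \E_{n-1} Y\bigr] = u + (Y - \E_{n-1} Y).$$
By Minkowski's inequality in $L^2(\E_{n-1})$,
$$\sqrt{\E_{n-1}|\Delta_n(e^u f)|^2} \le \sqrt{\E_{n-1} u^2} + \sqrt{\E_{n-1}|Y - \E_{n-1} Y|^2}.$$
I would then bound the second term by the second moment $\E_{n-1} Y^2$. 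Using the elementary estimate $|e^u - 1| \le |u| e^{|u|} \le e|u|$ (valid since $|u| \le 1$) together with $(h+u)^2 \le 1$, one gets $\E_{n-1} Y^2 \le e^2 \E_{n-1}[u^2(h+u)^2] \le e^2 \E_{n-1} u^2$. Combining,
$$\sqrt{\E_{n-1}|\Delta_n(e^u f)|^2} \le (1 + e) \sqrt{\E_{n-1} u^2},$$
so $\E_{n-1}|\Delta_n(e^u f)|^2 \le (1+e)^2 \E_{n-1} u^2 < 18 \E_{n-1} u^2$, as required.

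The main conceptual obstacle is the very first step: recognizing that $\E_n(e^u f) = e^u \E_n f$ reduces the quantity of interest to a function of the $\F_n$-measurable variables $h$ and $u$ alone. Without this reduction, one would be forced to bound something involving $f - \E_n f$, which a priori has nothing to do with $\E_{n-1}|\Delta_n f|^2 = \E_{n-1} u^2$: the oscillation of $f$ past time $n$ can be substantial even when $\Delta_n f$ vanishes. Once the reduction is made, the remainder is an elementary calculus estimate enabled by the constraint $|u| \le 1$ inherited from $0 \le f \le 1$.
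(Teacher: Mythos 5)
Your proof is correct, and it takes a genuinely different (and shorter) route than the paper. The paper expands $e^{\Delta_n f}=1+\Delta_n f+R_n$ with $|R_n|\le|\Delta_n f|^2$, splits $\Delta_n(e^{\Delta_n f}f)$ into three terms via $(a+b+c)^2\le 3(a^2+b^2+c^2)$, and then must handle the middle term $\Delta_n(f\cdot\E_n f)=\Delta_n(g^2)$ with $g=\E_n f$; this forces it to prove the comparison $\E_{n-1}|\Delta_n g^2|^2\le 4\,\E_{n-1}|\Delta_n g|^2$, which occupies most of Subsection~2.3 via an approximation argument with the partition into sets $A_{k,l}$. You instead isolate the exact linear part: after reducing to $e^u(h+u)$ with $u=\Delta_n f$, $h=\E_{n-1}f$, you write $\Delta_n(e^uf)=u+(Y-\E_{n-1}Y)$ where $Y=(e^u-1)(h+u)$ is \emph{pointwise} dominated by $e|u|$, so conditional Minkowski plus ``variance $\le$ second moment'' finishes the job. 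This buys a cleaner argument, no partition lemma, and a better constant $(1+e)^2<18$. The underlying reason your route avoids the paper's hardest step is the centering trick: replacing a conditional variance $\E_{n-1}|W-\E_{n-1}W|^2$ by $\E_{n-1}|W-c|^2$ for a well-chosen $\F_{n-1}$-measurable $c$ before estimating pointwise. Incidentally, the same trick gives a one-line proof of the paper's key sub-inequality, since $\E_{n-1}|\Delta_n g^2|^2\le\E_{n-1}|g^2-h^2|^2=\E_{n-1}|(g-h)(g+h)|^2\le 4\,\E_{n-1}|g-h|^2=4\,\E_{n-1}|\Delta_n g|^2$. All individual steps in your write-up check out: $\E_n(e^uf)=e^u\E_n f$ by $\F_n$-measurability of $u$, $\E_{n-1}u=0$ so the linear part is exactly $u$, $h+u=\E_n f\in[0,1]$ gives $|u|\le 1$, and $|e^u-1|\le|u|e^{|u|}\le e|u|$.
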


	We will prove this lemma in Subsection 2.3. Assume for now that it is true. Then, using it and the fact that $\alpha_n\ge 1/2$ we finally get that the expression \eqref{expression} is bounded by
	$$
	18 e^{-\sum_{m=1}^n\E_{m-1}|\Delta_m f|^2}\E_{n-1}|\Delta_n f|^2.
	$$
	Let us fix one point $t$ and denote $q_n=(\E_{n-1}|\Delta_n f|^2) (t)$. Then we proved that 
	$$
	\sigma(e^\pphi f)^2 (t)\leq 18 \sum_{n\ge 1}e^{-\sum_{m=1}^n q_m} q_n.
	$$
	The sum in the right hand side of this formula can be estimated by appropriate integral:
	$$
	18\sum_{n\ge 1}e^{-\sum_{m=1}^n q_m} q_n\leq 18\int_0^{+\infty} e^{-x} dx=18.
	$$
	
	It remains only to show that $\E(e^\pphi f)$ is not too small. But arguing as before (again, using equation \eqref{main-ident}) we get that
	$$
	\E (e^\pphi f)=e^{-\|f\|_{\infty} + \E f}\E f \ge e^{-\|f\|_\infty}\E f.
	$$
	
	Theorem~\ref{mult_sigma} is now proved if we show that we can choose the functions $\alpha_n$ satisfying conditions \eqref{main-ident} and \eqref{main-ineq} and prove Lemma~\ref{main_lemma}. These are the subjects of the next two subsections.
	
	\subsection{Construction of functions $\alpha_n$}
	
	Note that if $\alpha_n$ is an $\F_{n-1}$-measurable function then it is uniquely determined by the identity \eqref{main-ident}. Indeed, this identity is equivalent to the following:
	$$
	\E_{n-1}(f\cdot e^{\Delta_n f})=e^{\alpha_n\E_{n-1}|\Delta_n f|^2}\E_{n-1}f	$$
	and therefore we can write the following explicit formula for the function $\alpha_n$:
	$$
	\alpha_n=\frac{1}{\E_{n-1}|\Delta_n f|^2}\log\Big(\frac{\E_{n-1}(f\cdot e^{\Delta_n f})}{\E_{n-1} f}\Big).	
	$$
	On the $\F_{n-1}$-measurable set where one of the expressions $\E_{n-1}|\Delta_n f|^2$ and $\E_{n-1} f$ turns to zero the value of $\alpha_n$ can be arbitrary.
	
	It remains only to check that $\alpha_n\ge 1/2$, that is, to prove the inequality 
	\begin{equation}
	\E_{n-1}(f\cdot e^{\Delta_n f})\ge e^{\frac{1}{2}\E_{n-1}|\Delta_n f|^2}\E_{n-1}f.\label{main-est-sigma}
	\end{equation}
	
	It will be the consequence of the following inequality:
	$$
	\E_{n-1}(f\cdot e^{\Delta_n f})\ge (1+\E_{n-1}|\Delta_n f|^2)\E_{n-1}f.
	$$
	Indeed, here the right-hand side is bigger than the right-hand side of \eqref{main-est-sigma} because $|\Delta_n f|\le 1$ and $e^x\le 1+2x$ if $0\le x\le 1/2$.
	
	Let us use the Taylor expansion for the exponential function:
	\begin{equation}
	e^{\Delta_n f}=1+\Delta_n f+ \frac{1}{2}|\Delta_n f|^2 + Q_n,\label{taylor_exp}
	\end{equation}
	where $Q_n$ is $\F_n$-measurable function such that $|Q_n|\le \frac{1}{2}|\Delta_n f|^3$ (this is a consequence of one more elementary estimate: $|e^x-1-x-x^2/2|\le |x|^3/2$ for $x\le 1$).
	
	Now we can rewrite the desired estimate:
	$$
	\E_{n-1}[f\cdot (1+\Delta_n f + |\Delta_n f|^2/2 + Q_n)]\ge (1+\E_{n-1}|\Delta_n f|^2)\E_{n-1} f
	$$
	which is equivalent to the inequality
	$$
	\E_{n-1}(f\Delta_n f)+\frac{1}{2}\E_{n-1}(f|\Delta_n f|^2)+\E_{n-1}(fQ_n)\ge \E_{n-1}|\Delta_n f|^2 \E_{n-1} f.
	$$
	This estimate is easy. Indeed, it is not difficult to check that $\E_{n-1}(f\Delta_n f)=\E_{n-1}|\Delta_n f|^2\ge \E_{n-1}|\Delta_n f|^2 \E_{n-1} f$ and since $Q_n\ge -\frac{1}{2}|\Delta_n f|^3\ge -\frac{1}{2}|\Delta_n f|^2$ we also see that
	$$
	\frac{1}{2}\E_{n-1}(f|\Delta_n f|^2)+\E_{n-1}(fQ_n)\ge 0.
	$$
	
	\subsection{Proof of Lemma~\ref{main_lemma}}
	
	Finally, we only need to prove the inequality
	$$
	\E_{n-1}|\Delta_n(e^{\Delta_n f} f)|^2 \leq 18 \E_{n-1}|\Delta_n f|^2.
	$$
	Once again we use the Taylor expansion:
	$$
	e^{\Delta_n f}=1+\Delta_n f +R_n,
	$$
	where $R_n$ is an $\F_n$-measurable function such that $|R_n|\le |\Delta_n f|^2\le 1$. This is the consequence of elementary inequality $|e^x-1-x|\le x^2$ for $x\in [-1,1]$.
	
	Let us estimate the left-hand side of the desired inequality in a following way:
	\begin{multline*}
		\E_{n-1}|\Delta_n(e^{\Delta_n f} f)|^2=\E_{n-1}|\Delta_n[(1+\Delta_n f + R_n)f]|^2\\
		=\E_{n-1}|\Delta_n [f+(\E_nf-\E_{n-1}f)f + R_nf]|^2\\
		=\E_{n-1}|(1-\E_{n-1}f)\Delta_n f + \Delta_n (f\cdot\E_n f)+\Delta_n(R_n f)|^2\\
		\leq 3\big(\E_{n-1}|(1-\E_{n-1}f)\Delta_n f|^2 + \E_{n-1}|\Delta_n (f\cdot\E_n f)|^2 + \E_{n-1}|\Delta_n(R_n f)|^2\big).
	\end{multline*}
	We estimate each of three summands separately. At first, we obviously have
	$$
	\E_{n-1}|(1-\E_{n-1}f)\Delta_n f|^2 \leq \E_{n-1}|\Delta_n f|^2.
	$$
	The estimate for the third summand is also easy. We use that for an arbitrary (bounded) function $g$ we have $\E_{n-1}|\Delta_n g|^2=\E_{n-1}|\E_n g|^2 - |\E_{n-1} g|^2\leq \E_{n-1}|\E_n g|^2$ and write:
	\begin{multline*}
	\E_{n-1}|\Delta_n (R_nf)|^2\leq \E_{n-1}|\E_n(R_nf)|^2\leq \E_{n-1}(\E_n|R_n f|)^2\leq \E_{n-1} (\E_n |R_n f|)\\
	=\E_{n-1}|R_n f|\leq \E_{n-1}|\Delta_n f|^2.
	\end{multline*}

	It remains now only to prove that
	$$
	\E_{n-1}|\Delta_n(f\cdot\E_nf)|^2\leq 4 \E_{n-1}|\Delta_n f|^2.
	$$
	Let us denote $g=\E_n f$. Then $g$ is a function between $0$ and $1$ and we need to show that
	$$
	\E_{n-1}|\Delta_n g^2|^2 \leq 4\E_{n-1}|\Delta_n g|^2.
	$$
	This inequality is easy to prove if $n=0$ or if $\F_n$ and $\F_{n-1}$ are purely atomic. However, some work is required in order to prove it in a general case. First of all, it is better to rewrite both sides using $\F_n$-measurability of $g$:
	$$
	\E_{n-1}|\Delta_n g|^2 = \E_{n-1}\big[ (\E_n g) ^2 - 2\E_n g\cdot \E_{n-1} g + (\E_{n-1}g)^2\big]=\E_{n-1}g^2- (\E_{n-1}g)^2
	$$
	and similarly
	$$
	\E_{n-1}|\Delta_n g^2|^2 = \E_{n-1} g^4 - (\E_{n-1} g^2)^2.
	$$

	Next, our pointwise inequality is equivalent to the following inequality for integrals: for arbitrary $\F_{n-1}$-measurable set $B$ (of positive measure) we have to prove that
	\begin{equation}
		\label{to_prove_fin}
	\int_B(\E_{n-1}g^4)\, d\PP - \int_B(\E_{n-1}g^2)^2\, d\PP \leq 4\Big[ \int_B (\E_{n-1}g^2)\, d\PP - \int_B (\E_{n-1}g)^2\, d\PP \Big].
	\end{equation}
	
	Let us take an arbitrary $\F_{n-1}$-measurable set $A$ and write the following easy estimate:
	$$
	\frac{1}{\PP(A)^2}\int_A \int_A (g(x)^2-g(y)^2)^2 \, d\PP(x)\, d\PP(y)\\\leq \frac{4}{\PP(A)^2}\int_A \int_A (g(x)-g(y))^2 \, d\PP(x)\, d\PP(y).
	$$
	This is true since $|g(x)^2-g(y)^2|=|(g(x)-g(y))(g(x)+g(y))|\leq 2|g(x)-g(y)|$. The right-hand side can now be rewritten as
	\begin{multline*}
		\frac{1}{\PP(A)^2}\int_A \int_A (g(x)-g(y))^2 \, d\PP(x)\, d\PP(y) = 2\Big[ \frac{1}{\PP(A)} \int_A g^2\, d\PP - \Big(\frac{1}{\PP(A)}\int_A g\, d\PP \Big)^2 \Big]\\
		= 2\Big[ \frac{1}{\PP(A)} \int_A (\E_{n-1} g^2)\, d\PP - \Big(\frac{1}{\PP(A)}\int_A (\E_{n-1} g)\, d\PP \Big)^2 \Big].
	\end{multline*}

	Rewriting the left-hand side in a similar way, we see that we have proved that
	\begin{equation}
		\begin{aligned}
		\frac{1}{\PP(A)} \int_A (\E_{n-1} g^4)\, d\PP - \Big(\frac{1}{\PP(A)}\int_A (\E_{n-1} g^2)\, d\PP \Big)^2 \\
	\leq 4\Big[ \frac{1}{\PP(A)} \int_A (\E_{n-1} g^2)\, d\PP - \Big(\frac{1}{\PP(A)}\int_A (\E_{n-1} g)\, d\PP \Big)^2 \Big].\label{square_diff}
	\end{aligned}
	\end{equation}
	
	This inequality does not imply \eqref{to_prove_fin} immediately. However, on sets where expressions $\E_{n-1} g$ and $\E_{n-1} g^2$ ``do not oscillate much'' they are almost the same. To be more precise, fix some large number $N$ and for $0\le k,l \le N$ set
	$$
	A_{k,l}=\Big\{\frac{k}{N} \le \E_{n-1}g^2< \frac{k+1}{N}, \frac{l}{N} \le \E_{n-1} g < \frac{l+1}{N}\Big\}\cap B.
	$$
	These sets are $\F_{n-1}$-measurable. Some of them have measure $0$ (for example, if $k>l$ then $\PP(A_{k,l})=0$) but most importantly they do not intersect and form the partition of the set $B$. Besides that, for the sets $A_{k,l}$ such that $\PP(A_{k,l})\neq 0$ we have:
	$$
	\Big| \Big( \frac{1}{\PP(A_{k,l})} \int_{A_{k,l}} (\E_{n-1}g) \, d\PP \Big)^2 - \frac{1}{\PP(A_{k,l})} \int_{A_{k,l}} (\E_{n-1} g)^2\, d\PP \Big| \leq \Big( \frac{l+1}{N} \Big)^2 - \Big( \frac{l}{N} \Big)^2 \leq\frac{3}{N}.
	$$
	Similarly,
	$$
	\Big| \Big( \frac{1}{\PP(A_{k,l})} \int_{A_{k,l}} (\E_{n-1}g^2) \, d\PP \Big)^2 - \frac{1}{\PP(A_{k,l})} \int_{A_{k,l}} (\E_{n-1} g^2)^2\, d\PP \Big| \leq \frac{3}{N}.
	$$
	
	Let us apply the inequality \eqref{square_diff} for each set $A_{k,l}$ of non-zero measure instead of $A$. Adding and subtracting the appropriate term to each side, we write it in a following form:
	\begin{multline*}
		\frac{1}{\PP(A_{k,l})}\int_{A_{k,l}} (\E_{n-1}g^4)\, d\PP-\frac{1}{\PP(A_{k,l})}\int_{A_{k,l}} (\E_{n-1} g^2)^2\, d\PP\\ + \Big( \frac{1}{\PP(A_{k,l})}\int_{A_{k,l}} (\E_{n-1} g^2)^2\, d\PP - \Big( \frac{1}{\PP(A_{k,l})} \int_{A_{k,l}}(\E_{n-1}g^2)\, d\PP \Big)^2 \Big)\\
		\leq 4\Big[ \frac{1}{\PP(A_{k,l})}\int_{A_{k,l}} (\E_{n-1}g^2)\, d\PP-\frac{1}{\PP(A_{k,l})}\int_{A_{k,l}} (\E_{n-1} g)^2\, d\PP\\ + \Big( \frac{1}{\PP(A_{k,l})}\int_{A_{k,l}} (\E_{n-1} g)^2\, d\PP - \Big( \frac{1}{\PP(A_{k,l})} \int_{A_{k,l}}(\E_{n-1}g)\, d\PP \Big)^2 \Big) \Big].
	\end{multline*}

	As we have gust explained, the expressions on the second and on the fourth lines of this inequality are small. Hence we see that
	\begin{multline*}
		\frac{1}{\PP(A_{k,l})}\int_{A_{k,l}} (\E_{n-1}g^4)\, d\PP-\frac{1}{\PP(A_{k,l})}\int_{A_{k,l}} (\E_{n-1} g^2)^2\, d\PP\\ \le
		4\Big[ \frac{1}{\PP(A_{k,l})}\int_{A_{k,l}} (\E_{n-1}g^2)\, d\PP-\frac{1}{\PP(A_{k,l})}\int_{A_{k,l}} (\E_{n-1} g)^2\, d\PP\Big] + \frac{15}{N}. 
	\end{multline*}

	It remains to multiply both sides of this inequality by $\PP(A_{k,l})$ and sum over all sets $A_{k,l}$ of non-zero measure. We will get exactly the inequality \eqref{to_prove_fin} with the term $\frac{15}{N}\PP(B)$ added to its right-hand side. However, since $N$ is an arbitrary number, it implies the desired inequality and the proof of Theorem~\ref{mult_sigma} is finished.
	
	\section{Proof of theorem~\ref{mult_S_const}: another version of explicit construction}
	
	\subsection{Outline of proof}
	
	Let us now describe the construction of the multiplier that satisfies conditions of Theorem~\ref{mult_S_const}. The main idea is rather similar to what we have done in the previous section, however certain details need to be significantly changed since now instead of the expressions $\E_{n-1}|\Delta_n f|^2$ we are working with $|\Delta_n f|^2$ which are not $\F_{n-1}$-measurable. We will omit some details in the parts of the proof which are similar to the previous ones. 
	
	Let us put $m=e^\pphi$, where this time
	$$
	\pphi=-\|f\|_{\infty}+f-\sum_{n\ge 1}\beta_n|\Delta_n f|^2=-\|f\|_\infty +\E f + \sum_{n\ge 1}(\Delta_n f-\beta_n |\Delta_n f|^2),
	$$
	where $\beta_n$ are certain positive $\F_n$-measurable functions. The convergence of series here is understood in a.e. sense.
	
	We impose the following conditions on the functions $\beta_n$:
	\begin{gather}
		\E_{n-1}\Big[ f\cdot e^{\Delta_n f - \beta_n |\Delta_n f|^2} \Big]=\E_{n-1} f;\label{main_assump}\\
		\frac{2}{5}\le \beta_n \le \frac{1}{|\Delta_n f|}\quad \text{a.e.} \label{main_beta}
	\end{gather}
	
	We will prove that such functions exist in the next subsection. Assume for now that the functions $\beta_n$ are constructed. We need to estimate the quantity
	$$
	S(e^\pphi f)^2 = \sum_{n\ge 1} |\Delta_n(e^\pphi f)|^2.
	$$
	Fix one number $n$. Then $\pphi$ can be written as
	$$
	\pphi=-\|f\|_\infty + \E f + v_n + (\Delta_n f - \beta_n |\Delta_n f|^2)+z_n,
	$$
	where
	\begin{gather}
		v_n=\sum_{m<n} (\Delta_m f - \beta_m |\Delta_m f|^2);\\
		z_n=\sum_{m>n} (\Delta_m f - \beta_m |\Delta_m f|^2).
	\end{gather}
	We see that $v_n$ is an $\F_{n-1}$-measurable function and hence we have:
	$$
	\Delta_n (e^\pphi f) = e^{-\|f\|_{\infty} + \E f + v_n}\Delta_n (e^{z_n}e^{\Delta_n f - \beta_n |\Delta_n f|^2} f).
	$$
	We can get rid of the factor $e^{z_n}$ inside the martingale difference because as previously we have
	$$
	\E_{n}(e^{z_n} f)=\E_n f.
	$$
	This equality can be proved in exactly the same way as lemma~\ref{induct}.
	
	Therefore, the formula for one martingale difference can be written as
	$$
	\Delta_n (e^\pphi f) = e^{-\|f\|_{\infty} + \E f + v_n}\Delta_n (e^{\Delta_n f - \beta_n |\Delta_n f|^2} f).
	$$
	Since $-\|f\|_\infty+\E f + \sum_{m<n} \Delta_m f \leq 0$ a.e., the square of the martingale difference can be estimated as follows:
	$$
	|\Delta_n (e^\pphi f)|^2 \leq e^{-2\sum_{m=1}^{n-1} \beta_m |\Delta_m f|^2} |\Delta_n (e^{\Delta_n f - \beta_n |\Delta_n f|^2} f)|^2.
	$$
	We would like to prove now that
	\begin{equation}
	|\Delta_n (e^{\Delta_n f- \beta_n |\Delta_n f|^2} f)|\leq (4+e) |\Delta_n f|.\label{main_ineq}
	\end{equation}
	We will prove this inequality in Subsection~3.3. Assuming that it is true, we obtain the following estimate:
	\begin{multline*}
	|\Delta_n (e^\pphi f)|^2 \leq (4+e)^2 e^{-2\sum_{m=1}^{n-1} \beta_m |\Delta_m f|^2} |\Delta_n  f|^2\leq (4+e)^2 e^{-\frac{4}{5}\sum_{m=1}^{n-1} |\Delta_m f|^2} |\Delta_n  f|^2\\
	\leq  (4+e)^2 e^{4/5} e^{-\frac{4}{5}\sum_{m=1}^{n} |\Delta_m f|^2} |\Delta_n  f|^2.
	\end{multline*}

	If we fix one point $t\in \Omega$ and set $p_n=|(\Delta_n f)(t)|^2$, then we see that
	$$
	S(e^\pphi f)^2(t) \leq (4+e)^2 e^{4/5} \sum_{n\ge 1} e^{-\frac{4}{5}\sum_{m=1}^n p_m}p_n.
	$$
	This sum can be estimated by the appropriate integral:
	$$
	\sum_{n\ge 1} e^{-\frac{4}{5}\sum_{m=1}^n p_m}p_n \leq \int_{0}^{+\infty}e^{-4x/5}\, dx=\frac{5}{4}.
	$$
	It gives us the estimate for the square function from Theorem~\ref{mult_S_const}. Besides that, exactly as in the proof of the previous theorem we get that
	$$
	\E (e^\pphi f)=e^{-\|f\|_{\infty} + \E f}\E f \ge e^{-\|f\|_{\infty}}\E f.
	$$
	
	In the next two subsections we show how to construct the functions $\beta_n$ and prove the inequality \eqref{main_ineq}.
	
	\subsection{Construction of functions $\beta_n$}
	
	Let us now address the question of how to construct the functions $\beta_n$ that satisfy conditions \eqref{main_assump} and \eqref{main_beta}. This is not as straightforward as it was in the proof of the previous theorem with the functions $\alpha_n$ since now the equation \eqref{main_assump} is not enough to uniquely determine $\beta_n$. Note that, however, if we put $\beta_n=\frac{1}{|\Delta_n f|}$ then the left-hand side of \eqref{main_assump} will be not greater than the right-hand side. On the other hand, if we put $\beta_n=2/5$ then the left-hand side will be greater than the right-hand side:
	\begin{equation}
		\E_{n-1}\Big[ f\cdot e^{\Delta_n f - \frac{2}{5} |\Delta_n f|^2} \Big]\ge \E_{n-1} f.\label{toprove}
	\end{equation}
	We will prove this inequality in a moment. For now we would like to mention that if all sigma-algebras $\F_n$ were \emph{purely atomic} then this would be enough to \emph{construct} the functions $\beta_n$. Indeed, we need only to construct $\beta_n$ on each atom $I$ of filtration $\F_{n-1}$ which is divided into the atoms $I_1, \ldots, I_k$ of filtration $\F_n$. At first consider $\beta_n^{(0)}=2/5$ on all atoms $I_1, \ldots, I_k$ and the function $\beta_n^{(1)}$ which is equal to $1/|\Delta_n f|$  on $I_1$ and $2/5$ on all other atoms (if $\Delta_n f = 0$ on $I_1$, then the value of $\beta_n^{(1)}$ is not important; we can put it also equal to $2/5$ for example). The inequality \eqref{toprove} implies that the average of the function $f\cdot e^{\Delta_n f - \beta_n^{(0)}|\Delta_n f|^2}$ over $I$ is not smaller than the average of $f$; if these two averages coincide, then we are done.   If the average of the function $f\cdot e^{\Delta_n f - \beta_n^{(1)}|\Delta_n f|^2}$ over $I$ is still greater that the average of $f$, then we proceed to the next atom: consider the function $\beta_n^{(2)}$ which is equal to $1/|\Delta_n f|$ on $I_1\cup I_2$ and to $2/5$ on the rest of $I$, etc. This process will stop when we are on some atom $I_l$ and then we may put $\beta_n=\beta_n^{(l)}$ on all atoms except $I_l$ and the value of $\beta_n$ on $I_l$ is uniquely determined by the identity \eqref{main_assump} (it is possible to write an exact formula for it).
	
	However, if we want to consider arbitrary filtrations, we need the following lemma.
	
	\begin{lemma}
		Let $(\Omega, \G, \PP)$ be a probability space. Suppose that $\HH\subset\G$ is a sub-sigma-algebra which is countably generated. Let $F$ and $G$ be two functions in $L^\infty(\Omega, \G, \PP)$ such that $F\ge G$, $\E(F|\HH)\ge 0$ and $\E(G|\HH)\le 0$ \emph{(}all inequalities hold almost everywhere\emph{)}. Then there exists a function $H$ in $L^\infty(\Omega, \G, \PP)$ such that $F\ge H \ge G$ and $\E(H|\HH)=0$ \emph{(}also a.e.\emph{)}
	\end{lemma}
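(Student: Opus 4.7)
The plan is to produce $H$ as an explicit $\HH$-measurable convex combination of $F$ and $G$. Let $a := \E(F|\HH)$ and $b := -\E(G|\HH)$, chosen as bounded $\HH$-measurable representatives that are nonnegative everywhere (possible by the hypotheses of the lemma). Define an $\HH$-measurable function $\lambda : \Omega \to [0,1]$ by setting $\lambda := b/(a+b)$ on $\{a+b > 0\}$ and $\lambda := 0$ on $\{a+b = 0\}$; the assignment on the latter set is immaterial since there $a = b = 0$ automatically. I would then define $H := \lambda F + (1-\lambda) G$.

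The three verifications are all routine. Since $F \ge G$ and $\lambda \in [0,1]$, the sandwich $G \le H \le F$ holds pointwise a.e., so $H \in L^\infty(\Omega, \G, \PP)$ and is $\G$-measurable. Since $\lambda$ is bounded and $\HH$-measurable, it pulls out of conditional expectations and
\[
\E(H | \HH) = \lambda \, \E(F|\HH) + (1-\lambda)\, \E(G|\HH) = \lambda a - (1-\lambda) b,
\]
which equals $(ba - ab)/(a+b) = 0$ on $\{a+b>0\}$ and vanishes trivially where $a = b = 0$.

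I do not anticipate any serious obstacle here; in fact, the argument above does not use the countable generation of $\HH$ at all. That hypothesis would only enter an alternative, less efficient, exhaustion proof in which one approximates $\HH$ from below by finite sub-sigma-algebras $\HH_N$, solves the problem atom-by-atom in $\HH_N$ (where it reduces to a one-dimensional mean-value statement on the interval $[G,F]$), and extracts a weak$^*$ cluster point in $L^\infty$. Such an approach would deliver an $H$ of the more restricted bang-bang form $F\mathbf{1}_E + G\mathbf{1}_{\Omega\setminus E}$ for some $\G$-measurable set $E$, but the lemma as stated requires only the pointwise sandwich and the conditional-expectation identity, so the one-line convex-combination construction suffices. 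The only mildly subtle point is the pointwise handling of the negligible set $\{a+b=0\}$, which is dispatched by the arbitrary assignment $\lambda = 0$ there.
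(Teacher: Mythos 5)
Your proof is correct, and it is genuinely different from (and simpler than) the one in the paper. The paper proves the lemma by exhaustion and compactness: it picks a countable algebra $\{A_k\}$ generating $\HH$, for each $n$ builds a function $H_n$ with $F\ge H_n\ge G$ and $\int_{A_k}H_n\,d\PP=0$ for $k\le n$ (by choosing, on each atom of the finite algebra generated by $A_1,\dots,A_n$, a constant convex weight $s_j$ via the intermediate value theorem), and then extracts a weak\textsuperscript{*} cluster point in $L^\infty$. Your observation is that the convex weight can simply be written down in closed form as the $\HH$-measurable function $\lambda=b/(a+b)$ with $a=\E(F|\HH)$, $b=-\E(G|\HH)$; then $H=\lambda F+(1-\lambda)G$ satisfies the sandwich inequality pointwise and, since $\lambda$ is bounded and $\HH$-measurable, $\E(H|\HH)=\lambda a-(1-\lambda)b=0$ on $\{a+b>0\}$ and trivially on $\{a+b=0\}$ (where $a=b=0$ forces $\E(G|\HH)=0$). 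All steps check out, including the measurability of $\lambda$ and the handling of the degenerate set. Your version dispenses entirely with the countable-generation hypothesis and with the weak\textsuperscript{*} limit, which is a real gain: the lemma becomes a one-line interpolation fact, and the explicit formula for $\lambda$ even makes the resulting multiplier $\beta_n$ in the application more constructive, which is in the spirit of the paper's stated goals. The only thing the paper's argument ``buys'' that yours does not is an $H$ obtained as a limit of piecewise bang-bang interpolants, but nothing in the application requires that form.
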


	This lemma indeed implies existence of the required functions $\beta_n$: we need to apply it with $\G=\F_{n}$, $\HH=\F_{n-1}$, $F=( e^{\Delta_n f - \frac{2}{5} |\Delta_n f|^2}-1)\E_n f$ and $G=( e^{\Delta_n f - |\Delta_n f|}-1)\E_n f$; the formula for $\beta_n$ then will be as follows:
	$$
	\beta_n=\frac{-\log\Big( \frac{H+\E_n f}{\E_n f} \Big)+\Delta_n f}{|\Delta_n f|^2}.
	$$
	It remains only to note that without loss of generality we may assume that all sigma-algebras $\F_n$ are countably generated since each sigma-algebra generated by a random variable always has this property.

	\begin{proof}
		Let us find a countable algebra of sets $\{A_k\}_{k\ge 1}$ which generates sigma-algebra $\HH$. Fix a number $n$. We can find a function $H_n$ such that $\int_{A_k} H_n \, d\PP=0$ for $k=1,\ldots n$ and $F\ge H_k\ge G$. Indeed, we can find a finite number of pairwise disjoint $\HH$-measurable sets $B_1,\ldots, B_N$ so that for every $1\le k\le n$ each $A_k$ is a union of several sets $B_j$. We need to define the function $H_n$ on each of the sets $B_j$. For $0\le s\le 1$ put $H_{n,s}^{(j)}=sF+(1-s)G$ on $B_j$. Since $\int_{B_j} F \, d\PP = \int_{B_j} \E(F|\HH)\, d\PP \ge 0$ and $ \int_{B_j} G\, d\PP = \int_{B_j}\E(G|\HH)\le 0$, we see that $\int_{B_j}H_{n,0}^{(j)}\, d\PP \le 0$ and $\int_{B_j}H_{n,1}^{(j)}\, d\PP \ge 0$. Therefore we can find such $s_j$ that $\int_{B_j}H_{n,s_j}^{(j)}\, d\PP =0$ and put $H_n= H_{n,s_j}^{(j)}$ on $B_j$. Then $\int_{B_j} H_n\, d\PP = 0$ for every $j$ and hence $\int_{A_k} H_n\, d\PP = 0$ for $k=1,\ldots, n$.
		
		The functions $H_n$ lie in a ball of the space $L^\infty (\Omega,\G, \PP)$ and hence we can define $H$ as a weak* limit of a subsequence of $H_n$. Then we have $F\ge H\ge G$ a.e. and $\int_{A_k} H\, d\PP = \int_{A_k}\E(H|\HH)\, d\PP= 0$ for every $k$. It means that $
		\E(H|\HH) = 0$.
	\end{proof}

	It remains only to prove the inequality \eqref{toprove}. We can consider the Taylor expansion of the exponential function \eqref{taylor_exp} and also use the inequality $e^{-\frac{2}{5}|\Delta_n f|^2}\ge 1-\frac{2}{5}|\Delta_n f|^2$ in order to conclude that
	$$
		\E_{n-1}\Big[ f\cdot e^{\Delta_n f - \frac{2}{5} |\Delta_n f|^2}\Big]\ge \E_{n-1}\Big[f\Big(1-\frac{2}{5}|\Delta_n f|^2\Big)\Big(1+\Delta_n f+\frac{1}{2}|\Delta_n f|^2 + Q_n\Big)\Big].
	$$
	After expansion of the brackets, our inequality takes the following form:
	$$
	\E_{n-1}\Big[ f\Delta_n f + \Big( \frac{1}{2}-\frac{2}{5} \Big) f|\Delta_n f|^2 - \frac{2}{5} f(\Delta_n f)^3 - \frac{1}{5}f|\Delta_n f|^4 + fQ_n\Big(1-\frac{2}{5}|\Delta_n f|^2\Big) \ \Big]\ge 0.
	$$
	
	Since $|Q_n|\le\frac{1}{2}|\Delta_n f|^3$, we see that $\frac{1}{2}f|\Delta_n f|^2\ge \Big|fQ_n\Big(1-\frac{2}{5}|\Delta_n f|^2\Big)\Big|$. Also, we have $\E_{n-1}(f\Delta_n f)=\E_{n-1}|\Delta_n f|^2$ and hence it follows that this quantity dominates all remaining negative terms in our expression.
	
	\subsection{Proof of the inequality \eqref{main_ineq}}
	
	Let us now prove the inequality
	$$
	|\Delta_n (e^{\Delta_n f- \beta_n |\Delta_n f|^2} f)|\leq (4+e) |\Delta_n f|.
	$$
	We transform the left-hand side of this inequality using the formula \eqref{main_assump}:
	\begin{multline*}
		|\Delta_n (e^{\Delta_n f- \beta_n |\Delta_n f|^2} f)| = \Big| e^{\Delta_n f - \beta_n |\Delta_n f|^2}\E_n f - \E_{n-1}\Big(f\cdot e^{\Delta_n f - \beta_n |\Delta_n f|^2}\Big) \Big|\\
		=\Big|  e^{\Delta_n f - \beta_n |\Delta_n f|^2}\E_n f - \E_{n-1} f \Big|\\=\Big| e^{\Delta_n f - \beta_n |\Delta_n f|^2}\Delta_n f + \Big( e^{\Delta_n f - \beta_n |\Delta_n f|^2}- 1 \Big)\E_{n-1} f \Big|.
	\end{multline*}
The absolute value of the first summand does not exceed $e|\Delta_n f|$. Besides that, using that for $x\le 1$ we have $|e^x-1|\le 2|x|$, we see that the absolute value of the second summand is not greater than
$$
2|\Delta_n f - \beta_n |\Delta_n f|^2|\leq 2|\Delta_n f|+2\beta_n |\Delta_n f|^2\leq 4|\Delta_n f|,
$$
since $\beta_n \le \frac{1}{|\Delta_n f|}$.

\section{Proof of Theorem~\ref{mult_S}: application of Kislyakov's correction theorem}

\subsection{Formulation of general correction theorem}

Now we will formulate a version of the correction theorem from~\cite{Kis} which is relevant in our situation.

	Let $X$ be a Banach space of integrable functions on $\Omega$. For each function $g\in L^\infty$ we may define a functional $\Phi_g$ on $X$ as $\Phi_g(h)=\int_\Omega hg\, d\PP$. Suppose that the following two conditions hold.
	
	\begin{itemize}
		\item[A1.] Embedding $X\hookrightarrow L^1$ is continuous and the unit ball of $X$ is weakly compact in $L^1$.
		
		\item[A2.] For every $g\in L^\infty$ we have the following estimate:
		$$
		\PP\{|g|>t\}\leq \frac{c}{t}\|\Phi_g\|_{X^*}.
		$$
	\end{itemize}

\begin{theorem*}
	Let $f$ be a function in $L^\infty$, $\|f\|_\infty \le 1$. Then for every $0< \eps\le 1$ there exists a function $g$ in $X$ which satisfies the following conditions:
	\begin{gather}
		|g|+|f-g|=|f|;\\
		\PP\{f\neq g\}\le \eps\|f\|_1;\\
		\|g\|_X \le C(1+\log \eps^{-1}).
	\end{gather}
\end{theorem*}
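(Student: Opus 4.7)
The plan is to adopt Kislyakov's non-constructive strategy: rephrase the existence of $g$ as a dual statement and extract the logarithmic bound from condition~A2. First I would reformulate the problem. The identity $|g|+|f-g|=|f|$ forces $g=mf$ for a scalar multiplier $m$ with $|m|\le 1$, so the conclusion becomes the existence of such an $m$ satisfying $\PP\{m\ne 1\}\le\eps\|f\|_1$ and $\|mf\|_X\le C(1+\log\eps^{-1})$.

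Next I would argue by contradiction and invoke Hahn--Banach. Condition~A1 ensures that the set of candidate corrections $\{mf:|m|\le 1\}$ is a weakly compact convex subset of $L^1$. If no admissible $m$ yielded the required bound, separating this set from the appropriate ball in $X$ would produce a functional $\Phi\in X^*$; by~A2 the representing function $h$ would satisfy the weak-$L^1$ estimate $\PP\{|h|>t\}\le c\|\Phi\|_{X^*}/t$. A dyadic layer-cake decomposition $h=\sum_k h_k$, with $h_k$ supported on $\{2^{k-1}<|h|\le 2^k\}$, then has geometrically decreasing supports, and the top $O(\log\eps^{-1})$ levels accumulate on a set of measure $\lesssim\eps\|f\|_1$. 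Choosing $m$ to annihilate $f$ precisely there and summing the remaining dyadic contributions yields $\int mfh\lesssim 1+\log\eps^{-1}$, contradicting the assumed obstruction.

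The main obstacle is the lack of convexity of the constraint $\PP\{g\ne f\}\le\eps\|f\|_1$ in the variable $g$, which prevents a direct application of Hahn--Banach to the set of admissible corrections. Kislyakov's way around this is to relax the problem by introducing an auxiliary convex extremal problem (typically a penalized minimum of the $X$-norm over all $m$ with $|m|\le 1$) and to let the exceptional set emerge from the level-set structure of the dual functional rather than being imposed a priori. Verifying that the extremal dual element is an honest $L^\infty$-function to which~A2 applies, and that the relevant infima are attained thanks to the weak compactness built into~A1, is the technical crux of the argument.
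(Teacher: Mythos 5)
The first thing to say is that the paper does not prove this statement at all: it is quoted verbatim from Kislyakov's paper \cite{Kis} and used as a black box. The actual content of Section~4 is the verification that the hypotheses A1 and A2 hold for the specific space $X=L^\infty\cap SL^\infty$ (via the linearization $Tf=(\Delta_n f)_{n\ge 1}$, the duality $(H^1(\ell^2))^*=\BMO(\ell^2)$, and the weak type $(1,1)$ bound for the adjoint operator $U$). So your proposal is an attempt to reconstruct Kislyakov's proof of the abstract theorem, not anything this paper actually carries out.

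As such a reconstruction it names the right ingredients --- duality against the predual $Y$ furnished by A1, a Hahn--Banach separation, and integration of the weak-type bound from A2 over dyadic level sets to produce the factor $1+\log\eps^{-1}$ --- but it is not a proof, and you say so yourself. The constraint $\PP\{f\neq g\}\le\eps\|f\|_1$ is not convex, so the separation argument cannot be applied to the admissible set as written; you do not specify the convex relaxation, nor why the separating functional (a priori only an element of $X^*$) is represented by an honest function to which A2 applies --- note that A2 is a hypothesis about functionals of the form $\Phi_g$ with $g\in L^\infty$, not about arbitrary elements of $X^*$ --- nor how the exceptional set of measure $\le\eps\|f\|_1$ is then extracted from the level sets while preserving $|g|+|f-g|=|f|$. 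These are precisely the steps the paper describes as Kislyakov's ``ingenious duality argument'', and leaving them as ``the technical crux'' means the proposal is a plan for a proof rather than a proof. If the intent is merely to reduce the theorem to known results, the honest route is the one the paper takes: cite \cite{Kis} for the abstract statement and spend the effort on checking A1 and A2 for the space at hand, which is where the genuinely new work lies.
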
	

We may take $X=L^\infty\cap SL^\infty$, that is, the space of bounded functions such that their square functions are also bounded. The norm on $X$ is natural: $\|g\|_{X}=\max\{\|g\|_{\infty}, \|Sg\|_{\infty}\}$. 

Let us define the following operator that maps, say, functions in $L^2$ to $\ell^2$-valued functions: 
$$
T(f)=(\Delta_n f)_{n\ge 1}.
$$
Then $T$ is a linear operator and $\|Sf\|_{p}=\|Tf\|_{L^p(\ell^2)}$. In particular, the norm on $X$ can be rewritten as $\|g\|_X=\max\{\|g\|_\infty, \|Tg\|_{L^\infty (\ell^2)}\}$. It is also easy to see that $X$ is a Banach space.

It turns out that if $X$ satisfies condition A1 then it is a dual space. Let $Y$ be the closure of the set $\{\Phi_g: g\in L^\infty\}$ in the norm of $X^*$. Then following lemma is proved in \cite{Kis}.

\begin{lemma}
	Assume that $X$ satisfies condition A1. Then the dual of $Y$ is $X$ and the weak* topology on the ball of $X$ coincides with the weak topology of $L^1$.\label{lemma-kis}
\end{lemma}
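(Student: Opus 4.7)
The strategy is to construct the canonical map $J\colon X \to Y^*$ and use the weak compactness in condition A1 to force it to be a surjective isometry; the statement about topologies will then be a direct corollary.

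I would first define $J(h)(\Phi) = \Phi(h)$ for $h \in X$ and $\Phi \in Y \subset X^*$. Linearity, the bound $\|J(h)\|_{Y^*} \le \|h\|_X$, and injectivity are immediate (the last because $J(h)=0$ forces $\int hg\, d\PP = 0$ for every $g \in L^\infty$, hence $h=0$ in $L^1$ and therefore in $X$). The key observation is that $J$ restricted to $B_X$ is a homeomorphism from $B_X$ with its weak $L^1$ topology onto $J(B_X)$ with the weak* topology $\sigma(Y^*, Y)$. In one direction, if $h_\alpha \to h$ weakly in $L^1$ with $h_\alpha \in B_X$, then $J(h_\alpha)(\Phi_g) = \int h_\alpha g\, d\PP \to \int hg\, d\PP = J(h)(\Phi_g)$ for every $g \in L^\infty$, and a standard $3\eps$ argument based on the density of $\{\Phi_g : g \in L^\infty\}$ in $Y$ together with the uniform bound $\|J(h_\alpha)\|_{Y^*} \le 1$ extends this to all $\Phi \in Y$; the reverse direction is trivial since each $\Phi_g$ already lies in $Y$.

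Because $B_X$ is weakly compact in $L^1$ by A1, this homeomorphism shows that $J(B_X)$ is weak*-compact, hence weak*-closed and convex in $Y^*$. To conclude that $J(B_X) = B_{Y^*}$, I would argue by contradiction: suppose some $\psi \in B_{Y^*}$ lies outside $J(B_X)$, and apply Hahn--Banach separation in the weak* topology, whose continuous functionals are precisely evaluations by elements of $Y$. This furnishes $y \in Y$ and $c \in \mathbb{R}$ with $\mathrm{Re}\, \psi(y) > c \ge \sup_{h \in B_X} \mathrm{Re}\, y(h)$; since the unit ball $B_X$ is balanced and convex, the supremum equals $\|y\|_{X^*} = \|y\|_Y$, contradicting $\|\psi\|_{Y^*} \le 1$. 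Therefore $J(B_X) = B_{Y^*}$, from which both surjectivity of $J$ and the isometry $\|J(h)\|_{Y^*} = \|h\|_X$ follow by a scaling argument. The assertion about the two topologies is then just a restatement of the homeomorphism established in the previous step. The main obstacle in this plan is precisely the surjectivity of $J$---an abstract element of $Y^*$ need not a priori be represented by any $L^1$ function---and the only way to rule this out is through the weak compactness guaranteed by A1 combined with the weak* separation argument.
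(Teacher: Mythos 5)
Your argument is correct: it is the standard Dixmier--Ng-type proof that a Banach space whose unit ball is compact in a weaker Hausdorff locally convex topology is a dual space, carried out carefully (canonical map $J\colon X\to Y^*$, the identification of the weak $L^1$ and weak* topologies on $B_X$ via the density of $\{\Phi_g\}$ in $Y$ and the uniform bound, and weak* separation to get $J(B_X)=B_{Y^*}$). The paper itself gives no proof of this lemma and simply refers to \cite{Kis}; your argument is essentially the one used there, so there is nothing to compare beyond noting that you have correctly reconstructed the cited proof.
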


It remains only to verify conditions A1 and A2 for $X$: then Theorem~\ref{mult_S} will be proved. This can be done similarly to \cite[Lemma 2]{Kis}. However, here we have an operator $T$ instead of singular integral operators, so we will briefly discuss some details which are different in our situation.

\subsection{Verification of conditions of general theorem}

For a Banach space $Y$ we will denote by $B_Y$ the unit ball in this space.

\subsubsection{Verification of condition A1.}

The continuity of embedding $X\hookrightarrow L^1$ is obvious, so we only need to show the weak compactness of $B_X$ in $L^1$. Suppose that $\|f_n\|_\infty \le 1$, $\|S(f_n)\|_\infty \le 1$ and $f_n \to f$ weakly in $L^1$. We have to verify that $\|S(f)\|_\infty \le 1$. Note that on $B_{L^\infty}$ the weak convergence in $L^1$ is the same as weak* convergence in $L^\infty$.

Consider the operator $U:L^2(\ell^2)\to L^2$ that acts on $v=(v_m)_{m\ge 1}\in L^2(\ell^2)$ in a following way:
\begin{equation}
Uv = \sum_{m\ge 1}\Delta_m v_m.\label{dual-op}
\end{equation}
An easy formal verification shows that $U^*=T$.

Next, we also see that $U$ is a bounded operator from $H^1(\ell^2)$ to $L^1$ (for the definition of martingale Hardy space $H^1$ and its most important properties, including duality between $H^1$ and $\BMO$, see for example \cite[Chapter 2]{Long}; we need here $\ell^2$-valued Hardy spaces but there is no difference: the square function, maximal function etc. can be defined similarly to scalar-valued case and they have the same properties. It is also worth mentioning that a lot of information concerning vector-valued martingales in a much more general context of UMD Banach spaces can be found in the book \cite{Pis}). Indeed, we have:
\begin{multline*}
\|v\|_{H^1(\ell^2)}=\Big\| \Big( \sum_{k\ge 1} |\Delta_k v|_{\ell^2}^2 \Big)^{1/2} \Big\|_1=\Big\| \Big( \sum_{k\ge 1}\sum_{m\ge 1}|\Delta_k v_m|^2 \Big)^{1.2} \Big\|_1\\
\ge \Big\| \Big( \sum_{k\ge 1}|\Delta_k v_k|^2 \Big)^{1/2} \Big\|_1=\|S(Uv)\|_1 \gtrsim \|Uv\|_1.
\end{multline*}

Now we see that $T$ is a bounded and weakly* continuous operator from $L^\infty$ to $\BMO(\ell^2)=(H^1(\ell^2))^*$. The space $\BMO(\ell^2)$ is defined by the following seminorm:
$$
\|v\|_{\BMO(\ell^2)}=\sup_{n\ge 1}\|\E_n(|v-\E_{n-1}v|_{\ell^2}^2)\|_\infty.
$$
Therefore, $Tf_n\to Tf$ weakly* in $\BMO(\ell^2)$. Besides that, $Tf_n\in B_{L^\infty_0(\ell^2)}$ where $L^\infty_0(\ell^2)$ denotes the space of functions $v\in L^\infty(\ell^2)$ such that $\E v = 0\in\ell^2$. But this ball is compact in the weak* topology of $\BMO$ since the inclusion $L^\infty_0(\ell^2)\hookrightarrow \BMO(\ell^2)$ is weak* continuous (since it is a dual operator to the inclusion $H_0^1(\ell^2)\hookrightarrow L_0^1(\ell^2)$ where the subscript $0$ is again used to indicate the subspace which consists of functions with mean value $0$). The condition A1 is verified.

\subsubsection{Verification of condition A2} 

The main ingredient for verification of condition A2 is the weak type $(1,1)$ estimate for the operator $U$.

\begin{lemma}
	The operator $U$ defined by \eqref{dual-op} is a bounded operator from $L^1(\ell^2)$ to $L^{1,\infty}$.\label{weak-type}
\end{lemma}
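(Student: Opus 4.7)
The plan is to prove the weak-type $(1,1)$ bound for $U$ by combining its $L^2(\ell^2)\to L^2$ boundedness with a Gundy--Calder\'on--Zygmund decomposition of the $\ell^2$-valued function $v$.

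First I would verify that $U\colon L^2(\ell^2)\to L^2$ is a contraction. Since $U^{*}=T$ and the martingale differences are pairwise orthogonal in $L^2$, one has $\|Tf\|_{L^2(\ell^2)}^2=\sum_{n\ge 1}\|\Delta_n f\|_2^2=\|f-\E_0 f\|_2^2\le\|f\|_2^2$, so by duality $\|U\|_{L^2(\ell^2)\to L^2}\le 1$.

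Next, fix $\lambda>0$ and $v\in L^1(\ell^2)$ with $\|v\|_{L^1(\ell^2)}=1$. Introduce the stopping time $\tau=\inf\{n\colon \E_n|v|_{\ell^2}>\lambda\}$, so that Doob's maximal inequality gives $\PP\{\tau<\infty\}\le 1/\lambda$. I would then perform a Gundy-type decomposition $v=g+h$ adapted to the form of $U$: one natural choice is
$$
g_m=v_m\mathbf{1}_{\{\tau>m\}}+\E_{m-1}\bigl(v_m\mathbf{1}_{\{\tau\le m\}}\bigr),\qquad h=v-g.
$$
The point is that (i) $\|g\|_{L^2(\ell^2)}^2\lesssim\lambda$, so Chebyshev together with the $L^2$-bound yields $\PP\{|Ug|>\lambda/2\}\lesssim\|g\|_{L^2(\ell^2)}^2/\lambda^2\lesssim 1/\lambda$; and (ii) each bad martingale difference $\Delta_m h_m=\Delta_m(v_m\mathbf{1}_{\{\tau\le m\}})$ is, modulo its $\F_{m-1}$-measurable compensator, supported inside $\{\tau\le m\}\subset\{\tau<\infty\}$. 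Summing in $m$ and absorbing the compensator contributions into an exceptional set still contained in $\{\tau<\infty\}$, one obtains $\PP\{|Uh|>\lambda/2\}\lesssim 1/\lambda$, and adding the two estimates gives the claim.

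The main obstacle will be carrying out the Gundy decomposition so that the good part really lies in $L^2(\ell^2)$ with norm $\lesssim\sqrt{\lambda}$ while the bad part causes no exceptional set essentially larger than $\{\tau<\infty\}$. In the scalar martingale setting the analogous step is classical, but here $U$ couples the $m$-th component $v_m$ only through the $m$-th martingale difference, so the decomposition must be handled index by index while simultaneously maintaining the $\ell^2$-norm control on $g$ and the support/predictability structure of $h$. This component-wise bookkeeping is the only place where genuine extra work (compared with the scalar singular-integral discussion in \cite{Kis}) is required.
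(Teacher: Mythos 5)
Your overall strategy --- the $L^2(\ell^2)\to L^2$ bound for $U$ (which is correct, by orthogonality of the $\Delta_m$) combined with a Gundy decomposition and Chebyshev --- is exactly the route the paper takes; the paper simply invokes the known three-part Gundy decomposition for $\ell^2$-valued martingales (citing Long and Pisier) rather than constructing one by hand. The concrete decomposition you propose, however, does not have the two properties you need. First, your good part is not $L^2$-controlled. The stopping time $\tau=\inf\{n:\E_n|v|_{\ell^2}>\lambda\}$ bounds the \emph{conditional expectations} of $|v|_{\ell^2}$ on $\{\tau>m\}$, but gives no pointwise bound on $v_m$ there, because $v_m$ need not be $\F_m$-measurable. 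For instance, take $v=(v_1,0,0,\dots)$ with $v_1=(\eps/\delta)\mathbf{1}_{[0,\delta]}$ on $[0,1]$ with the dyadic filtration and $2\eps\le\lambda$: then $\E_0|v|_{\ell^2}$ and $\E_1|v|_{\ell^2}$ are everywhere $\le\lambda$, so $\{\tau>1\}=\Omega$ and $g_1=v_1$, whence $\|g\|_{L^2(\ell^2)}^2=\eps^2/\delta$ can be made arbitrarily large while $\lambda\|v\|_{L^1(\ell^2)}=\lambda\eps$ is fixed. A good part satisfying $\|g\|_{L^2(\ell^2)}^2\lesssim\lambda\|v\|_{L^1(\ell^2)}$ has to be built from the martingale differences $\Delta_k v$ of the $\ell^2$-valued martingale $(\E_n v)_n$ (which the stopping time does control, up to the single large jump at time $\tau$), not from the raw components $v_m$.

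Second, the compensators cannot be ``absorbed into an exceptional set still contained in $\{\tau<\infty\}$''. Since $\{\tau\le m\}$ is $\F_m$- but not $\F_{m-1}$-measurable, the function $\E_{m-1}(v_m\mathbf{1}_{\{\tau\le m\}})$ is in general nonzero on entire $\F_{m-1}$-atoms meeting $\{\tau\le m\}$, i.e.\ well outside $\{\tau<\infty\}$. The only available control on these terms is the $L^1$ bound $\sum_m\E\big(|v_m|\mathbf{1}_{\{\tau\le m\}}\big)$, which is an $\ell^1$-in-$m$ quantity and is \emph{not} dominated by $\|v\|_{L^1(\ell^2)}=\E\big[(\sum_m|v_m|^2)^{1/2}\big]$. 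This is precisely why Gundy's decomposition has three pieces rather than two: $v=W+Y+Z$ with $\|W\|_{L^2(\ell^2)}^2\le 2\lambda\|v\|_{L^1(\ell^2)}$, with $\PP\{\sup_k|\Delta_kZ|_{\ell^2}>0\}\le\|v\|_{L^1(\ell^2)}/\lambda$, and with a middle piece satisfying $\big\|\sum_k|\Delta_kY|_{\ell^2}\big\|_1\lesssim\|v\|_{L^1(\ell^2)}$; this middle piece is what absorbs the large jump at time $\tau$ together with the compensators, and it is handled by Chebyshev in $L^1$ via the pointwise inequality $|\Delta_kY_k|\le|\Delta_kY|_{\ell^2}$. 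Once you replace your component-wise two-part decomposition by this standard three-part decomposition of the $\ell^2$-valued martingale generated by $v$, the argument closes exactly as you intend, and this is what the paper does.
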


\begin{proof}
	For arbitrary $v\in L^1(\ell^2)$ and $t>0$ we need to prove the inequality
	\begin{equation}
	\PP \Big\{ \Big| \sum_{k\ge 1}\Delta_k v_k \Big|>t \Big\}\leq \frac{c}{t} \|v\|_{L^1(\ell^2)}.\label{weak}
	\end{equation}
	
	This inequality is an immediate consequence of Gundy's decomposition for ($\ell^2$-valued) martingales. For the proof of Gundy's decomposition see \cite[Theorem 1.3.1.8]{Long}; for the vector-valued version see also \cite[Theorem 5.15]{Pis}. 
	
	For arbitrary $t>0$ we decompose $v\in L^1(\ell^2)$ as $v=W+Y+Z$ where
	\begin{gather}
		\|W\|_{L^2(\ell^2)}^2 \leq 2t \|v\|_{L^1(\ell^2)};\\
		\Big\| \sum_{k\ge 1} |\Delta_k Y|_{\ell^2} \Big\|_1 \leq 4 \|v\|_{L^1(\ell^2)};\\
		|\{\sup_k |\Delta_k Z|_{\ell^2}>0\}|\leq \frac{\|v\|_{L^1(\ell^2)}}{t}.
	\end{gather}
	We should estimate each of three summands. For the first one, we have
	$$
	\PP \Big\{ \Big| \sum_{k\ge 1}\Delta_k W_k \Big|>t \Big\} \leq \frac{\|W\|_{L^2(\ell^2)}^2}{t^2}\leq \frac{2\|v\|_{L^1(\ell^2)}}{t}.
	$$
	For the second, we write:
	\begin{multline*}
	\PP \Big\{ \Big| \sum_{k\ge 1}\Delta_k Y_k \Big|>t \Big\}\leq \frac{1}{t} \Big\| \sum_{k\ge 1} |\Delta_k Y_k| \Big\|_1\\ \leq \frac{1}{t} \Big\|\sum_{k\ge 1} \Big( \sum_{m\ge 1} |\Delta_k Y_m|^2 \Big)^{1/2} \Big\|_1\leq \frac{4\|v\|_{L^1(\ell^2)}}{t}.
	\end{multline*}
	Finally, for the last one we have:
	$$
	\PP \Big\{ \Big| \sum_{k\ge 1}\Delta_k Z_k \Big|>t \Big\}\leq \PP \Big\{ \Big| \sum_{k\ge 1}\Delta_k Z_k \Big|>0 \Big\}\leq \frac{\|v\|_{L^1(\ell^2)}}{t}.
	$$
	Combining these estimates, we get the inequality \eqref{weak} (with $3t$ instead of $t$ in the left-hand side).
	
\end{proof}

From our verification of condition A1 (essentially taken from \cite{Kis}) and Lemma~\ref{lemma-kis} it is not difficult to conclude that the natural embedding
$$
X\hookrightarrow (L^\infty \oplus L^\infty(\ell^2))_\infty,\quad f\mapsto (f, Tf)
$$
is a weak* continuous isometry. For every $g$ the functional $\Phi_g$ is in predual of $X$ and therefore we can use Hahn--Banach theorem in order to extend it to a weak* continuous functional on $(L^\infty \oplus L^\infty(\ell^2))_\infty$. It means that there exist functions $u\in L^1$, $v\in L^1(\ell^2)$ such that 
\begin{equation}
\|u\|_1 + \|v\|_{L^1(\ell^2)}\leq \|\Phi_g\|_{X^*}\label{hahn-ban}
\end{equation}
and for every $f\in X$
$$
\int_\Omega fg\, d\PP = \langle \Phi_g, f \rangle = \langle f, u \rangle + \langle Tf, v \rangle = \int_\Omega(fu)\, d\PP + \int_\Omega \sum_{m\ge 1} v_m \Delta_m f\, d\PP.
$$
We claim that $g=u + Uv$. Indeed, fix some $N>0$. We may apply the above identity to any function $f\in L^\infty(\Omega, \F_N, \PP)$ (this function is clearly in $X$ because there are only finitely many non-zero summands in the definition of the expression $Sf$) to get that
$$
\int_\Omega f\E_N g\, d\PP = \int_\Omega f((\E_N u) + U(\E_N v))\, d\PP.
$$
It follows that $\E_N g = \E_N u + U(\E_N v)$. It remains to tend $N$ to infinity (then $\E_N v \to v$ in $L^1(\ell^2)$ and hence by Lemma~\ref{weak-type} $U(\E_N v) \to Uv$ in $L^{1,\infty}$) and we indeed get that $g=u + Uv$.

Condition A2 now follows from \eqref{hahn-ban} if we apply Lemma~\ref{weak-type} once again.

\section{Open problems}

\textbf{Problem 1}. As we have seen, Theorem~\ref{mult_S} is non-constructive but slightly more powerful than Theorem~\ref{mult_S_const}. We do not know however if there exists a general correction theorem in the spirit of Theorem~\ref{mult_S} for the operator $\sigma$. The main obstacle is that for application of Kislyakov's theorem we need a \emph{linear} operator; it is easy to linearize the operator $S$ (we have done it above by defining the operator $T$) but it is not clear if something similar can be done for $\sigma$. Therefore, it would be interesting to obtain some version of correction theorem similar to Theorem~\ref{mult_S} for conditional square function operator $\sigma$.

\textbf{Problem 2.} In the paper \cite{J-M} in the dyadic case it is noted that the constructed multiplier function $m$ lies in the (dyadic) Muckenhoupt class $A_\infty$. Therefore, the natural question arises: can the abstract approach which we used in the proof of Theorem~\ref{mult_S} be applied in order to obtain a multiplier function $m$ which lies in the class $A_\infty$ (even in the simplest dyadic case)? The same question is also interesting for other correction theorems, say, from the paper \cite{Kis}.

\textbf{Problem 3}. There exists a notion of martingale $A_\infty$ class related to an arbitrary filtered probability space, see e.g. \cite[Chapter 6]{Long}. We did not investigate the properties of the multiplier functions which were constructed in the proofs of Theorems~\ref{mult_sigma} and~\ref{mult_S_const} but the question if they belong to a (properly defined) $A_\infty$ class seems to be interesting and non-trivial.

\medskip

\textbf{Acknowledgments.} The author is kindly grateful to Professor P. F. X. M\"{u}ller for many fruitful discussions and useful comments concerning the material of the present paper.

\end{document}